\numberwithin{equation}{section}
\newtheorem{theorem}{Theorem}[section]
\newtheorem{lemma}[theorem]{Lemma}
\theoremstyle{definition}
\theoremstyle{remark}
\newtheorem{remark}[theorem]{Remark}
\numberwithin{equation}{section}
\providecommand{\tabularnewline}{\\}
\title{The Prescribed Ricci Curvature Problem on Three-Dimensional Unimodular Lie Groups}
\author{Timothy Buttsworth}
\begin{document}
\maketitle
\begin{abstract}
 Let $G$ be a three-dimensional unimodular Lie group, and let $T$ be a left-invariant symmetric 
 $(0,2)$-tensor field on $G$. We provide the necessary and sufficient conditions on $T$ for the existence of a pair 
 $(g,c)$ consisting of a left-invariant Riemannian metric $g$ and a positive constant $c$ such that $Ric(g)=cT$, where 
 $Ric(g)$ is the Ricci curvature of $g$. We also discuss the uniqueness of such pairs and show that, in most cases, there exists at most one 
 positive constant $c$ such that $Ric(g)=cT$ is solvable for some left-invariant Riemannian metric $g$.
\end{abstract}

\maketitle
\section{Introduction}
A problem of fundamental interest in Riemannian geometry is finding a 
Riemannian metric $g$ to satisfy the prescribed Ricci curvature equation
\begin{equation}\label{PRCE}
 Ric(g)=T
\end{equation}
for some fixed symmetric $(0,2)$-tensor field $T$ on a manifold $M$. In 
\cite{DeT}, DeTurck shows that if $T$ is non-degenerate at a point $p\in M$, then there is a Riemannian metric 
$g$ solving equation \eqref{PRCE} in some neighbourhood of $p$. DeTurck and Goldschmidt in \cite{Gold} also show
that \eqref{PRCE} holds in a neighbourhood of a point if 
$T$ has constant rank and satisfies certain other constraints. Further results on local existence are available in \cite{Besse,APB,APC}. 

One would like to know when it is possible to find a Riemannian metric $g$ such that equation 
\eqref{PRCE} holds on all of $M$, and not merely on some neighbourhood. Many results on this problem rely on the inverse function theorem, for example, see 
 \cite{DeT2,Delay01,Delay02,Godfrey}. Older 
results on the global solvability of \eqref{PRCE} are available in~\cite{Besse}, and more recent progress can be found in, for example, 
\cite{Pina,Pina2,APC,HMPRC}, 
but in general, finding results on the 
global solvability of \eqref{PRCE} is appears to be challenging. 

The work of Milnor in \cite{Milnor} contains results on the possible signatures the Ricci curvature of left-invariant Riemannian metrics can have 
on Lie groups. Subsequent to Milnor's results, a lot of work has been done investigating
the Ricci curvature of left-invariant metrics. For example, in \cite{SCLG}, Ha and Lee complete Milnor's classification of the available signatures of the Ricci 
curvature of left-invariant metrics on three-dimensional Lie groups, and in
\cite{Krem}, Kremlev and Nikonorov also investigate the available signatures of the Ricci curvature, but on four dimensional Lie groups. 
In \cite{RicEig}, Kowalski and Nikcevic find the possible eigenvalues of the Ricci curvature of left-invariant metrics on
three-dimensional Lie groups. However, the Ricci curvature cannot be recovered from merely the signature or the eigenvalues 
so these results do not provide solutions of \eqref{PRCE}. 

One result provided by 
DeTurck and Hamilton in \cite{DS2} and \cite{Hamilton2}, respectively, shows that if $T$ is a positive-definite $(0,2)$-tensor field on the sphere $\mathbb{S}^2$, then 
\eqref{PRCE} is solvable if and only if the volume of $T$ is $4\pi$. This result indicates that, instead 
of finding a Riemannian metric $g$ to solve \eqref{PRCE}, it may be reasonable to look for a Riemannian metric $g$ and a positive constant $c$ such that 
\begin{equation}\label{PRCEC}
 Ric(g)=cT.
\end{equation}
Indeed, in \cite{Hamilton2}, Hamilton shows that if $T$ is left-invariant and positive-definite on the Lie group $SO(3)$, then there is a unique 
$c>0$ and a left-invariant Riemannian metric $g$, unique up to scaling, such that \eqref{PRCEC} holds. 
Similarly, Pulemotov shows in \cite{HMPRC} that if $H$ is a maximal connected Lie subgroup of a compact Lie group $G$, 
then there is a solution of \eqref{PRCEC} for any invariant positive semi-definite $T$ on the 
homogeneous space 
$G/H$. Furthermore, as shown in \cite{HMPRC,Rubin}, the pair $(g,c)$ is unique up to the scaling of $g$ when there 
are two inequivalent irreducible summands in the isotropy representation.

The results in \cite{DeTKoi,DS2,Hamilton2,HMPRC,Rubin} only
apply to closed manifolds; there has been very little work on the global solvability of 
\eqref{PRCEC} on noncompact manifolds. In this paper, we resolve the question of global solvability of the prescribed Ricci curvature on three-dimensional unimodular Lie groups $G$ where 
$T$ and $g$ are required to be left-invariant, thus extending the work of Hamilton in \cite{Hamilton2} to certain noncompact manifolds and arbritrary signatures of $T$.
We also remark that our results imply the results about the available signatures of the Ricci cuvature of left-invariant metrics on three 
dimensional Lie groups in \cite{Milnor} and \cite{SCLG}, but do not imply, nor are implied by the results of \cite{RicEig}.

\section{Summary of Results}
Let $G$ be a three-dimensional and unimodular Lie group, and let $\mathfrak{g}$ be the Lie algebra of $G$. Milnor shows in \cite{Milnor} that any 
left-invariant Riemannian metric
$g$ on $G$ is diagonalisable in a basis $\{V_1,V_2,V_3\}$ of $\mathfrak{g}$ such that, if 
$\epsilon_{ijk}$ is the Levi-Civita symbol, the Lie bracket relations are 
\begin{equation}
 [V_i,V_j]=\sum_{k=1}^{3}\epsilon_{ijk}\lambda_kV_k\label{NICE}
\end{equation}
for some numbers $\lambda_k$. 
By scaling and reordering the basis elements, we can assume that $\lambda_k\in \{-2,0,2\}$, 
that there are more non-negative values of $\lambda_k$ than 
negative values, that $\lambda_3=0$ if any 
zeroes appear and that $\lambda_1=2$, unless $\lambda_1=\lambda_2=\lambda_3=0$. 
The possibilities for $\lambda_1,\lambda_2,\lambda_3$ with these extra 
constraints are then in one-to-one correspondence with the different three-dimensional unimodular Lie groups. 

Our results are stated in Theorems \ref{SO3T}, \ref{SL2T}, \ref{E2T}, \ref{E11T} and \ref{HT} below. 
Together these theorems tell us the following. 
\begin{theorem}
 Let $T$ be a given left-invariant $(0,2)$ tensor field. There exists a pair 
$(g,c)$, where $g$ is a left-invariant Riemannian metric and 
$c>0$, such that $Ric(g)=cT$, if and only if both of the following statements hold:

(i) $T$ is diagonalisable in a basis $V_1,V_2,V_3$ which satisfies 
\eqref{NICE}
 
 (ii) the diagonal components $T_1,T_2,T_3$ satisfy one of the constraints in Table \ref{table} depending on the Lie group $G$ we study.
\end{theorem}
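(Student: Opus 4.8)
The plan is to reduce the equation $Ric(g)=cT$ to an explicit algebraic system and then analyse that system separately for each of the five Lie groups. The starting point is Milnor's theorem quoted above: any left-invariant metric $g$ is diagonal in some basis satisfying \eqref{NICE}, and a standard computation (see \cite{Milnor}) shows that in such a basis the Ricci tensor is diagonal as well, with $(0,2)$-components
\begin{equation*}
 R_i=\frac{\lambda_i^2 g_i^2-(\lambda_j g_j-\lambda_k g_k)^2}{2 g_j g_k},\qquad (i,j,k)\ \text{a cyclic permutation of }(1,2,3),
\end{equation*}
where $g=\mathrm{diag}(g_1,g_2,g_3)$ with each $g_i>0$. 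This already gives the necessity of (i): if $Ric(g)=cT$ then $T=c^{-1}Ric(g)$ is diagonal in the very basis that diagonalises $g$, and that basis satisfies \eqref{NICE}, so $T$ is diagonalisable in a basis of the required type.

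Granting (i), I would fix such a basis, write $T=\mathrm{diag}(T_1,T_2,T_3)$, and observe that $Ric(g)=cT$ becomes the system $R_i(g_1,g_2,g_3)=cT_i$. The formula above is invariant under the rescaling $g\mapsto sg$, so the image of the map $\Phi=(R_1,R_2,R_3)$ on the positive orthant is a two-dimensional surface in $\mathbb{R}^3$. Since $c>0$ is free, the system is solvable exactly when the open ray $\{c(T_1,T_2,T_3):c>0\}$ meets $\mathrm{Image}(\Phi)$; the constraints of Table \ref{table} are nothing but the description of which rays do so, for each admissible $(\lambda_1,\lambda_2,\lambda_3)\in\{-2,0,2\}^3$. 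Because this ray meets the surface at most once in most cases, the same picture explains the near-uniqueness of $c$ asserted in the abstract.

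The substantive work---the content of Theorems \ref{SO3T}, \ref{SL2T}, \ref{E2T}, \ref{E11T} and \ref{HT}---is to pin down $\mathrm{Image}(\Phi)$ explicitly in each case. Here it helps to set $p_i=\lambda_i g_i$, so that each numerator factors as $p_i^2-(p_j-p_k)^2=(p_i-p_j+p_k)(p_i+p_j-p_k)$; the signs of the $p_i$ are fixed by the signs of the $\lambda_i$ (as $g_i>0$), and these in turn pin down the admissible signs of the $R_i$ and the inequalities relating them. For each group I would (a) read off from the range of $\Phi$ the sign pattern and inequalities forced on $(T_1,T_2,T_3)$, proving necessity, and (b) invert the system for the ratios $g_j/g_i$ and for $c$, checking that the resulting $g_i$ and $c$ are strictly positive, proving sufficiency.

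I expect the main obstacle to be step (b): $\Phi$ is a genuinely nonlinear and non-monotone map of the positive orthant, so describing its image and inverting it requires careful real-algebraic bookkeeping. The most delicate points are the boundary cases---where some $T_i$ vanishes or where a defining inequality degenerates to an equality---and the verification that the inequalities in Table \ref{table} are sharp, i.e.\ that the necessity and sufficiency directions meet exactly at the stated thresholds. The non-compact groups, whose metrics realise several different Ricci signatures as $g$ varies, are where the analysis is most intricate, whereas the compact case $SO(3)$ should be comparatively tame.
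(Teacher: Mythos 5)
Your reduction coincides with the paper's: Milnor's diagonalisation, the diagonal Ricci formula (your $R_i$ is the paper's $2x_jx_k/(v_jv_k)$ after factoring the numerator, exactly as in your remark about $p_i^2-(p_j-p_k)^2$), the observation that this forces condition (i), and the reformulation of $Ric(g)=cT$ as the question of whether the ray $\{cT:c>0\}$ meets the scale-invariant image of $(R_1,R_2,R_3)$ on the positive orthant. This is precisely the content of Lemma \ref{BS}, and your picture of a ray meeting a two-dimensional surface correctly anticipates both the existence criterion and the generic uniqueness of $c$ (including the possibility of two intersection points, which does occur for $SO(3)$ with signature $(+,-,-)$).

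The gap is that everything after the reduction is a statement of intent rather than a proof. The content of the theorem is the explicit list of conditions in Table \ref{table}, and you derive none of them: you do not obtain $T_3+T_1>0$ for $SL(2)$, the cubic criterion $\bar f$ for the $(+,-,-)$ case on $SO(3)$, or the degenerate cases with some $T_i=0$, and you do not say how to ``invert the system for the ratios $g_j/g_i$,'' which is exactly where the difficulty sits. The paper's device for this (following Hamilton) is to normalise $v_1v_2v_3c=1$ so the system becomes $2v_ix_jx_k=T_i$, and to show that its solutions correspond --- away from the locus $x_1x_2x_3(x_1+x_2+x_3)=0$, which must be treated separately and produces the rows of the table with infinitely many solutions --- to roots $p$ of a single scalar equation $\sum_i p/(p\pm T_i)=1$, i.e.\ of an explicit cubic, lying in an interval determined by the positivity of $v_1,v_2,v_3$; counting those roots yields both the existence conditions and the multiplicity of $c$. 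Some such mechanism is indispensable for $SO(3)$ and $SL(2)$ (for the groups with $\lambda_3=0$ the system is simple enough to solve directly, as you suggest). A secondary omission: the components $T_1,T_2,T_3$ depend on the choice of diagonalising basis satisfying \eqref{NICE}, so the sign conventions in Table \ref{table} and the uniqueness assertions surrounding the theorem require controlling the admissible changes of basis, which the paper does in Lemmas \ref{SO3Change}, \ref{SL2C} and \ref{E2CB} and which your plan does not mention.
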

 For each possibility, the uniqueness of $g$ and $c$ in the pair is studied. Note that if $(g,c)$ is a solution, 
 then $(c'g,c)$ is also a solution for any $c'>0$ because of the scaling invariance of the Ricci curvature. Therefore, when we talk about 
the uniqueness of $(g,c)$, we always mean uniqueness up to scaling of $g$. For example, when we write $g_1\sim g_2$ in Table \ref{table}, we mean that 
 $g_1$ is a scalar multiple of $g_2$. Similarly, if we say our solution $(g,c)$ is unique, then we mean that if we find another solution $(\bar{g},\bar{c})$, then 
$\bar{c}=c$ and $\bar{g}$ is a scalar multiple of $g$. 

Let us make some brief remarks about some subtleties associated with the table. 
\begin{remark}
It is important to note that we are not treating uniqueness up to isometries of Lie algebras, only up to scaling. For instance, it is well understood that 
all left-invariant metrics on $\mathbb{R}^3$ are isometric, so constitute the same geometry, but for our purposes, we will treat these metrics as distinct. This is why we say that the solutions of $Ric(g)=0$ are non-unique on $\mathbb{R}^3$, even though all of the solutions are isometric. 
\end{remark}
\begin{table}
\caption{}
\label{table}
\centering
\begin{tabular}{ |c|c|c|c|c| }
 \hline
Lie Group  & Signature of & Necessary and 
& Is $c$ the & 
$g_1 \sim g_2$ for any \tabularnewline
$\left(\lambda_{1},\lambda_{2},\lambda_{3}\right)$ & $(T_1,T_2,T_3)$ & sufficient  conditions 
&same for all & solutions $(g_1,c)$\tabularnewline
& & on $\left(T_{1},T_{2},T_{3}\right)$ & solutions? &  and $(g_2,c)$ \tabularnewline
\hline 
$SO\left(3\right)$ & $\left(+,+,+\right)$ & - & Yes & Yes\tabularnewline
$\left(2,2,2\right)$ & $\left(+,0,0\right)$ & - & Yes & No\tabularnewline
 & $\left(+,-,-\right)$ & See Theorem \ref{SO3T} & No & Yes\tabularnewline
\hline 
$SL\left(2\right)$ & $\left(+,-,-\right)$ & $T_{3}+T_{1}>0$ & Yes & Yes\tabularnewline
$\left(2,2,-2\right)$ & $\left(-,+,-\right)$ & $T_{3}+T_{2}>0$ & Yes & Yes\tabularnewline
 & $\left(-,-,+\right)$ & $\max\left\{ -T_{1},-T_{2}\right\} <T_{3}$ & Yes & Yes\tabularnewline
 &  & $\min\left\{ -T_{1},-T_{2}\right\} >T_{3}$ & Yes & Yes\tabularnewline
 &  & $T_{3}=-T_{1}=-T_{2}$ & Yes & No\tabularnewline
 & $\left(-,0,0\right)$ & - & Yes & No\tabularnewline
 & $\left(0,-,0\right)$ & - & Yes & No\tabularnewline
\hline 
$E\left(2\right)$ & $\left(0,0,0\right)$ & - & No & No\tabularnewline
$\left(2,2,0\right)$ & $\left(+,-,-\right)$ & $T_{1}+T_{2}>0$ & Yes & Yes\tabularnewline
 & $\left(-,+,-\right)$ & $T_{1}+T_{2}>0$ & Yes & Yes\tabularnewline
\hline 
$E\left(1,1\right)$ & $\left(0,0,-\right)$ & - & Yes & No\tabularnewline
$\left(2,-2,0\right)$ & $\left(+,-,-\right)$ & $T_{1}+T_{2}>0$ & Yes & Yes\tabularnewline
 & $\left(-,+,-\right)$ & $T_{1}+T_{2}>0$ & Yes & Yes\tabularnewline
\hline 
$H_3$ & $\left(+,-,-\right)$ & - & Yes & Yes\tabularnewline
$(2,0,0)$ & & & & \tabularnewline
\hline 
$\mathbb{R}^{3}$ & $\left(0,0,0\right)$ & - & No & No\tabularnewline
$(0,0,0)$ & & & & \tabularnewline
\hline
\end{tabular}
\end{table}

\begin{remark}
In the table, the signature of $(T_1,T_2,T_3)$ is not simply the signature 
of the tensor $T$, but the specific signs of $T_1,T_2$ and $T_3$ when diagonalised in a basis with $\lambda_k$ values given in the exact order appearing in the table. This is why, for instance, the signatures $(+,-,-)$ and $(-,+,-)$ both appear for $E(2)$. 
For $SO(3)$ however, we have the ordering convention $T_1\ge T_2\ge T_3$ (see section 4 for more details), so this subtlety does not come up. For example, 
on $SO(3)$, the results concerning the signature $(+,0,0)$ also imply the results for the signatures $(0,+,0)$ and $(0,0,+)$. 
\end{remark}
One result that can be seen from Table \ref{table} is the following theorem. 
\begin{theorem}\label{CU}
 Let $G$ be a $3$-dimensional unimodular Lie group and let $T$ be a left-invariant symmetric $(0,2)$ tensor. Unless 
 $T=0$ or $T$ has signature $(+,-,-)$ and $G=SO(3)$, there is at most one positive constant $c>0$ such that 
 there exists a left-invariant Riemannian metric $g$ solving $Ric(g)=cT$. 
\end{theorem}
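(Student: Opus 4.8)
The plan is to obtain Theorem \ref{CU} as a corollary of the combined existence theorem stated at the beginning of this section together with the fourth column of Table \ref{table}, which records, for each Lie group and each admissible signature of $(T_1,T_2,T_3)$, whether the constant $c$ is forced to agree across all solutions. First I would dispose of the case in which no solution exists. If $T$ fails condition (i) or (ii) of that theorem---either because it is not diagonalisable in a basis satisfying \eqref{NICE}, or because its diagonal entries violate the constraint listed in the table---then there is no pair $(g,c)$ with $Ric(g)=cT$, and the assertion ``there is at most one $c$'' holds vacuously. It therefore suffices to treat the case where $T$ realises one of the rows of the table.

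In that case I would simply read off the fourth column. The only rows for which $c$ is \emph{not} the same across all solutions are $SO(3)$ with signature $(+,-,-)$, $E(2)$ with signature $(0,0,0)$, and $\mathbb{R}^{3}$ with signature $(0,0,0)$. The latter two signatures force $T_1=T_2=T_3=0$, hence $T=0$, while the first is exactly the case excluded in the hypothesis of Theorem \ref{CU}. In every other row the entry is ``Yes,'' so any two solutions $(g_1,c_1)$ and $(g_2,c_2)$ satisfy $c_1=c_2$, which is the desired conclusion. Thus the whole theorem reduces to the individual uniqueness-of-$c$ statements contained in Theorems \ref{SO3T}--\ref{HT}.

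It is worth recording the mechanism behind those statements, since that is where the real work lies, and identifying the main obstacle. Writing a diagonal metric as $g=\mathrm{diag}(x_1,x_2,x_3)$ in the fixed basis of \eqref{NICE} and applying Milnor's formula for the Ricci curvature, the equation $Ric(g)=cT$ becomes the system $m_jm_k/(x_jx_k)=2cT_i$ for $\{i,j,k\}=\{1,2,3\}$, where $m_i=\lambda_jx_j+\lambda_kx_k-\lambda_ix_i$. Both sides are invariant under the rescaling $x\mapsto tx$, reflecting the scale invariance of the Ricci curvature, so one may normalise the $x_i$ and attempt to solve for the ratios $x_i/x_j$ and for $c$: forming quotients of pairs of these equations eliminates $c$ and produces relations among the $x_i$, and substituting back into a single equation then expresses $c$ in terms of the $T_i$. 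The key point is that, away from the excluded cases, this elimination pins down $c$ uniquely even when the $x_i$ (and hence $g$) are not uniquely determined, as already happens for $SO(3)$ with signature $(+,0,0)$ and for $E(1,1)$ with signature $(0,0,-)$. I expect the main obstacle to be the degenerate signatures, where some $T_i$ vanishes or two of them coincide and the quotient manipulation breaks down, so that $c$ must be extracted directly from the structure of the system. The genuine failure of uniqueness for $SO(3)$ with signature $(+,-,-)$ confirms that no purely formal argument can succeed uniformly, and that each family $(\lambda_1,\lambda_2,\lambda_3)$ must be analysed individually, exactly as in Theorems \ref{SO3T}--\ref{HT}.
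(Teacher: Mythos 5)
Your proposal is correct and follows essentially the same route as the paper, which presents Theorem \ref{CU} as a direct consequence of Table \ref{table}: the only rows where $c$ fails to be unique are $SO(3)$ with signature $(+,-,-)$ and the two signature-$(0,0,0)$ rows, the latter forcing $T=0$. Your additional remarks about the mechanism correctly anticipate how the individual theorems establish the fourth column, so nothing further is needed.
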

\section{The Ricci Curvature of Invariant Metrics}
To prove the results 
in Table \ref{table} we must first compute the Ricci curvature of an arbitrary 
 left-invariant metric $g$ on the three-dimensional unimodular Lie group $G$. See \cite{Hamilton2} for closely related 
 computations. 
 For a given left-invariant metric $g$, 
 choose a basis $V_1,V_2,V_3$ of $\mathfrak{g}$ such that 
\eqref{NICE} holds in which $g$ is diagonal. Therefore, 
$g(V_i,V_j)=\delta_j^i v_i$ for some positive numbers $v_1,v_2,v_3$, where 
$\delta^i_j$ is the Kronecker symbol. 
We then compute the Ricci curvature and find 
\begin{align*}
 Ric(V_1,V_1)=\frac{2x_2x_3}{v_2v_3},\qquad
 Ric(V_2,V_2)=\frac{2x_1x_3}{v_1v_3},\qquad
 Ric(V_3,V_3)=\frac{2x_1x_2}{v_1v_2}
\end{align*}
where 
\begin{align*}
x_1=\frac{\lambda_2 v_2+\lambda_3 v_3-\lambda_1 v_1}{2},
x_2&=\frac{\lambda_1 v_1+\lambda_3 v_3-\lambda_2 v_2}{2},
x_3=\frac{\lambda_1 v_1+\lambda_2 v_2-\lambda_3 v_3}{2};
\end{align*}
and $R(V_i,V_j)=0$ whenever $i\neq j$, cf. \cite[section 6]{Hamilton2}. Therefore, 
the Ricci curvature of $g$ in a basis where $g$ is diagonal is also diagonal.
 
This tells us immediately that for $Ric(g)=cT$ to hold, since $g$ is diagonalisable, $g$ and $T$ must be simultaneously 
diagonalisable in a basis $V_1,V_2,V_3$ which satisfies~\eqref{NICE}. 
In the search for a left-invariant metric $g$ and positive constant $c$ to solve the equation $Ric(g)=cT$, it therefore suffices 
to look for metrics on $\mathfrak{g}$ which 
are diagonal in some basis $V_1,V_2,V_3$ satisfying \eqref{NICE} in which $T$ is also diagonal, say 
\begin{equation}\label{TV}
 T(V_i,V_j)=\delta_j^iT_i.
\end{equation}
Once such a basis 
$V_1,V_2,V_3$ is found, solving $Ric(g)=cT$ for $g$ diagonal in the basis $V_1,V_2,V_3$ is equivalent to solving 
the system of equations 
\begin{equation}
 \frac{2x_jx_k}{v_jv_k}=cT_i \label{THEQc}
\end{equation}
for $i=1,2,3$ where for a given $i$, the indices $j,k\in\{1,2,3\}$ are chosen such that $i,j,k$ are pairwise distinct. Also note that $v_i$
are required to be positive. Due to the scaling invariance of the Ricci curvature, if we have a solution $(v_1,v_2,v_3,c)$ of 
this system of equations, then we can scale $(v_1,v_2,v_3)$ so that the additional constraint $v_1v_2v_3c=1$ is satisfied. 
If this constraint is satisfied, then \eqref{THEQc} becomes
\begin{equation}
 2v_ix_jx_k=T_i, \label{THEQ}
\end{equation}
so we can study this system of equations as an alternative of \eqref{THEQc}. Note that if we find a solution 
$v_1,v_2,v_3$ of \eqref{THEQ}, 
we can recover the associated $c$ from the formula $v_1v_2v_3c=1$, finding a solution of \eqref{THEQc}; cf. \cite[section 6]{Hamilton2}. 

This reasoning is summarised in the following.
\begin{lemma}\label{BS}
Let $T$ be a left-invariant $(0,2)$ tensor field on a three-dimensional 
unimodular Lie group $G$. There exists a left-invariant Riemannian metric 
$g$ and a constant $c>0$ solving \eqref{PRCEC} if and only if the following conditions hold: 

(i) there exists a basis $V_1,V_2,V_3$ of the Lie algebra $\mathfrak{g}$ satisfying \eqref{NICE} in which $T$ is diagonal 

(ii) there is a solution $(v_1,v_2,v_3,c)$ of \eqref{THEQc}, or equivalently, a solution 
$(v_1,v_2,v_3)$ of \eqref{THEQ} with $T_i$ given by \eqref{TV}.
\end{lemma}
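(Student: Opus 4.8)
The plan is to read this lemma as a formalization of the discussion already carried out, so that the proof becomes essentially bookkeeping once two ingredients are in hand: Milnor's diagonalization result from \cite{Milnor}, and the explicit Ricci formula recorded above. I would prove the two implications separately and then dispose of the claimed equivalence between \eqref{THEQc} and \eqref{THEQ}.

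For the forward implication, I would begin with a solution $(g,c)$ of \eqref{PRCEC} and apply Milnor's theorem to produce a basis $V_1,V_2,V_3$ satisfying \eqref{NICE} in which $g$ is diagonal, say $g(V_i,V_j)=\delta^i_j v_i$ with each $v_i>0$. The displayed Ricci formula then shows that $Ric(g)$ is diagonal in this same basis, since all of its off-diagonal components vanish. Because $cT=Ric(g)$ and $c>0$, the tensor $T$ is forced to be diagonal in this basis as well, which is precisely condition (i); writing out the three diagonal entries of the equation $Ric(g)=cT$ then recovers \eqref{THEQc} verbatim, giving condition (ii).

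For the converse, given a basis as in (i) together with a solution $(v_1,v_2,v_3,c)$ of \eqref{THEQc}, I would define $g$ to be the left-invariant metric determined by $g(V_i,V_j)=\delta^i_j v_i$; positivity of the $v_i$ guarantees $g$ is genuinely Riemannian. By the Ricci formula, $Ric(g)$ is diagonal with diagonal entries $2x_jx_k/(v_jv_k)$, and these equal $cT_i$ by \eqref{THEQc}; since $T$ is diagonal in this basis with entries $T_i$ as in \eqref{TV}, we conclude $Ric(g)=cT$, so $(g,c)$ solves \eqref{PRCEC}.

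The only point requiring any real care is the stated equivalence of solving \eqref{THEQc} and \eqref{THEQ}, which rests on the scale invariance of the Ricci curvature as a $(0,2)$-tensor: each $x_i$ is linear in $(v_1,v_2,v_3)$, so the substitution $v_i\mapsto\mu v_i$ scales every $x_i$ by $\mu$ and leaves each ratio $2x_jx_k/(v_jv_k)$ unchanged. Given any solution of \eqref{THEQc}, I would rescale by $\mu=(v_1v_2v_3 c)^{-1/3}>0$ to arrange the normalization $v_1v_2v_3 c=1$; multiplying \eqref{THEQc} through by $v_1v_2v_3$ and invoking this normalization then yields \eqref{THEQ}, while conversely a solution of \eqref{THEQ} produces a solution of \eqref{THEQc} upon setting $c=(v_1v_2v_3)^{-1}$. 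I do not anticipate a genuine obstacle anywhere: the heavy lifting is the Ricci computation, which is already available, so the lemma reduces to assembling these observations into a clean if-and-only-if statement.
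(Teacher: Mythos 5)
Your proposal is correct and follows essentially the same route as the paper, which proves this lemma by the discussion immediately preceding it: Milnor's simultaneous diagonalization, the explicit diagonal Ricci formula forcing $T$ to be diagonal alongside $g$, and the normalization $v_1v_2v_3c=1$ to pass between \eqref{THEQc} and \eqref{THEQ}. Your explicit choice of the rescaling factor $\mu=(v_1v_2v_3c)^{-1/3}$ is a correct and slightly more detailed rendering of the paper's appeal to scale invariance.
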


In the following sections, we will analyse the solvability of \eqref{THEQc}, or equivalently,~\eqref{THEQ}, on the different three-dimensional unimodular Lie groups 
for 
$T_i$ given in \eqref{TV}. 
Note that when discussing solutions of \eqref{THEQc}, we will use the same uniqueness convention as we do for solutions of $Ric(g)=cT$. Namely, we will only discuss uniqueness of solutions 
of \eqref{THEQc} up to scaling of the triplet $(v_1,v_2,v_3)$. 

Once we have found solutions of \eqref{PRCEC}, we will verify the conditions of the following lemma to study 
the uniqueness of solutions. 
\begin{lemma}\label{UNIQUE}
Let $T$ be a non-zero left-invariant $(0,2)$ tensor field on $G$ and let $g$ be a left-invariant metric such that $Ric(g)=cT$ for some $c>0$. 
Then $(g,c)$ is the unique pair such that 
$Ric(g)=cT$ if both of the following conditions hold:

(i) $g$ is diagonal in any basis $X_1,X_2,X_3$ in which 
$T$ is diagonal and \eqref{NICE} is satisfied 

(ii) the solution 
$v_1,v_2,v_3$ of \eqref{THEQ} is unique whenever 
$T_1,T_2,T_3$ are the components of $T$ in a $X_1,X_2,X_3$ basis satisfying \eqref{NICE}.

Furthermore, (ii) is a necessary condition for $(g,c)$ to be the unique pair solving 
$Ric(g)=cT$. 
\end{lemma}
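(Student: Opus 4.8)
The plan is to prove the two directions separately: first that conditions (i) and (ii) together force uniqueness of the pair $(g,c)$, and then that (ii) alone is necessary. Both directions rest on reducing an arbitrary solution to the normalised system \eqref{THEQ} in a single common basis, as permitted by Lemma \ref{BS}, together with Milnor's observation \cite{Milnor} that every left-invariant metric is diagonal in some basis obeying \eqref{NICE}.

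For sufficiency, suppose $(\bar g,\bar c)$ is any pair with $Ric(\bar g)=\bar c\,T$. First I would apply Milnor's diagonalisation to $\bar g$, obtaining a basis $X_1,X_2,X_3$ satisfying \eqref{NICE} in which $\bar g$ is diagonal. Since the Ricci curvature formulae derived above are diagonal in any basis in which the metric is diagonal, $Ric(\bar g)$ is diagonal in this basis, and because $\bar c>0$ the tensor $T=\bar c^{-1}Ric(\bar g)$ is diagonal here too. Now I invoke hypothesis (i): as $T$ is diagonal in the \eqref{NICE}-basis $X_1,X_2,X_3$, the metric $g$ must also be diagonal in this same basis. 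Thus $g$ and $\bar g$ are simultaneously diagonal, say with entries $v_i$ and $\bar v_i$. After rescaling so that $v_1v_2v_3c=1$ and $\bar v_1\bar v_2\bar v_3\bar c=1$---which is legitimate under the stated convention of uniqueness up to scaling---both triples solve \eqref{THEQ} with the common data $T_i=T(X_i,X_i)$. Hypothesis (ii) then forces $(\bar v_1,\bar v_2,\bar v_3)=(v_1,v_2,v_3)$, whence $\bar g$ is a scalar multiple of $g$ and $\bar c=(v_1v_2v_3)^{-1}=c$.

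For necessity I argue by contraposition: assuming (ii) fails, I exhibit two genuinely distinct solutions. Failure of (ii) provides a basis $X_1,X_2,X_3$ obeying \eqref{NICE}, in which $T$ is diagonal with entries $T_1,T_2,T_3$, and two distinct positive solutions $(v_1,v_2,v_3)\neq(v_1',v_2',v_3')$ of \eqref{THEQ}. By Lemma \ref{BS} these yield left-invariant metrics diagonal in $X_1,X_2,X_3$ solving \eqref{PRCEC} with constants $(v_1v_2v_3)^{-1}$ and $(v_1'v_2'v_3')^{-1}$. The one point to check is that these metrics are not scalar multiples of one another: if $(v_i')=t(v_i)$, then since each $x_\ell$ is linear in the $v$'s we have $x_\ell'=tx_\ell$, so \eqref{THEQ} gives $t^3T_i=T_i$ for every $i$; as $T\neq 0$ some $T_i\neq 0$, forcing $t=1$ and contradicting distinctness. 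Hence the two pairs are distinct even up to scaling, so $Ric(g)=cT$ admits at least two distinct solution pairs and $(g,c)$ cannot be the unique one.

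The only real subtlety---and the step I would treat most carefully---is the interplay between condition (i) and the freedom of choosing a diagonalising basis in the sufficiency direction: condition (i) is exactly what guarantees that the basis in which the \emph{competitor} metric $\bar g$ is diagonalised is also a basis diagonalising $g$, allowing both metrics to be compared through the single system \eqref{THEQ}. Everything else is the bookkeeping of the scaling normalisation $v_1v_2v_3c=1$ and the homogeneity computation showing distinct solutions of \eqref{THEQ} are never proportional.
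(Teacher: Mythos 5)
Your proof is correct and follows essentially the same route as the paper: diagonalise the competitor metric $\bar g$ in a basis satisfying \eqref{NICE}, use hypothesis (i) to conclude $g$ is diagonal in that same basis, normalise so that both triples solve \eqref{THEQ}, and apply hypothesis (ii). The only differences are that you spell out two points the paper leaves implicit --- why $T$ is automatically diagonal in the basis diagonalising $\bar g$ (via the diagonality of the Ricci tensor), and, for the necessity of (ii), why two distinct solutions of \eqref{THEQ} can never be scalar multiples of one another --- and both of these additions are sound.
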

\begin{proof}
Suppose that these two conditions were true and suppose that we had another left-invariant metric $\overline{g}$ such that $Ric(\overline{g})=\overline{c}T$ 
 for some positive constant $\overline{c}$. Let $X_1,X_2,X_3$ be a basis in which 
 $\overline{g}$ is diagonal, $T$ is diagonal and \eqref{NICE} holds. We know that 
 $g$ is also diagonal in this basis by condition~$(i)$. 
 
 If we let $v_1,v_2,v_3$ be the components of $g$ and $\overline{v_1},\overline{v_2},\overline{v_3}$ be the components of $\overline{g}$ in the 
 $X_1,X_2,X_3$ basis, then 
 $(v_1,v_2,v_3,c)$ and $(\overline{v_1},\overline{v_2},\overline{v_3},\overline{c})$ 
 are both solutions of \eqref{THEQc} with $T_i=T(X_i,X_i)$. 
 After possibly scaling both $g$ and $\overline{g}$ we can assume that 
 $v_1v_2v_3c=1$ and $\overline{v_1}\overline{v_2}\overline{v_3}\overline{c}=1$. This ensures  that 
 $v_1,v_2,v_3$ and $\overline{v_1},\overline{v_2},\overline{v_3}$ are both solutions of 
 \eqref{THEQ}. 
 The solution of \eqref{THEQ} is unique by condition $(ii)$, which means that 
 $g$ is the same as $\overline{g}$ up to scaling. We also see that $cT=Ric(g)=Ric(\overline{g})=\overline{c}T$ so $c=\overline{c}$ as well. 
 
It is clear that if $(ii)$ does not hold, we get multiple solutions $(g,c)$ of $Ric(g)=cT$. 
\end{proof}
\begin{remark}\label{+--an}
If $G=SO(3)$ and $T$ has signature $(+,-,-)$, we find from Lemma~\ref{SO3+--} that \eqref{THEQ} has at most two solutions. 
Lemma \ref{UNIQUE} can be generalised in the obvious way to show that, in this case,  
there are at most two solutions of $Ric(g)=cT$.
\end{remark}

\section{The Special Orthogonal Group $SO(3)$}
We have $\lambda_k=2$ for $k=1,2,3$. Theorem \ref{SO3T} below is our main result of this 
section. Note that if 
$T$ is diagonal in a basis satisfying~\eqref{NICE}, it is also diagonal in a basis 
$V_1,V_2,V_3$ satisfying~\eqref{NICE} such that
$T(V_1,V_1)\ge T(V_2,V_2)\ge T(V_3,V_3)$ because $\lambda_1=\lambda_2=\lambda_3$. This follows by reordering and rescaling the basis vectors. Therefore, for Theorem \ref{SO3T} and its proof, 
we assume without loss of generality that the diagonal entries of $T$ are ordered in this way.

\begin{theorem}\label{SO3T}
Let $T$ be a left-invariant $(0,2)$ tensor field on $SO(3)$. There 
exists a left-invariant Riemannian metric $g$ and a positive constant $c>0$ such that 
$Ric(g)=cT$ if and only if $T$ is diagonalisable with  $T=diag(T_1,T_2,T_3)$ in a basis $V_1,V_2,V_3$ satisfying \eqref{NICE} with 
$\lambda_1=\lambda_2=\lambda_3=2$ and  one of the following conditions is satisfied: 

(i) $T_1\ge T_2\ge T_3>0$

(ii) $T_1>0$, $T_2=T_3=0$ 

(iii) $T_1>0$, $T_3\le T_2<0$ and either $\bar{f}(-\frac{T_1+T_2+T_3}{3})\ge0$ while $-\frac{T_1+T_2+T_3}{3}<0$, or else $\bar{f}(-T_1)>0$ where 
 \begin{align*}
  \bar{f}(p)=2p^3+(T_1+T_2+T_3)p^2-T_1T_2T_3.
 \end{align*}

In case (i), the solution $(g,c)$ is unique. 
In case (ii), there are infinitely many solutions $(g,c)$, but $c$ is the same for all solutions. In case (iii), the solution $(g,c)$ is unique unless 
 $\bar{f}(-T_1)<0$, $-\frac{T_1+T_2+T_3}{3}<0$ and $\bar{f}(-\frac{T_1+T_2+T_3}{3})>0$, in which 
 case there are two solutions $(g_1,c_1)$ and $(g_2,c_2)$ with $c_1\neq c_2$.   
\end{theorem}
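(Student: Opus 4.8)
The plan is to turn the system \eqref{THEQ} into a single cubic equation and then reduce the entire theorem to locating and counting the roots of that cubic subject to positivity (sign) constraints.

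First I would exploit the symmetry $\lambda_1=\lambda_2=\lambda_3=2$, which gives $x_1=v_2+v_3-v_1$, $x_2=v_1+v_3-v_2$, $x_3=v_1+v_2-v_3$. This linear change of variables inverts to $v_1=\tfrac12(x_2+x_3)$, $v_2=\tfrac12(x_1+x_3)$, $v_3=\tfrac12(x_1+x_2)$, so the requirement that $g$ be a genuine metric, $v_i>0$, becomes exactly the condition that all three pairwise sums $x_i+x_j$ be positive. Substituting into \eqref{THEQ} puts the system in the symmetric form
\begin{align*}
(x_2+x_3)x_2x_3=T_1,\qquad (x_1+x_3)x_1x_3=T_2,\qquad (x_1+x_2)x_1x_2=T_3.
\end{align*}
Thus the problem reduces to finding triples $(x_1,x_2,x_3)$ solving this system with $x_i+x_j>0$ for every pair. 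The basic sign tool is $\mathrm{sign}(x_jx_k)=\mathrm{sign}(T_i)$, immediate from $2v_ix_jx_k=T_i$ and $v_i>0$.

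Next I would pass to a cubic. Writing $S=x_1+x_2+x_3$ and $P=x_1x_2x_3$ and assuming $P\neq0$ (if $P=0$ then some $T_i=0$, which I handle directly), the identity $x_jx_k=P/x_i$ rewrites each equation as $(S-x_i)P/x_i=T_i$, i.e. $x_i=PS/(T_i+P)$. Summing these relations and clearing denominators collapses the whole system to $\bar f(P)=0$; conversely each root $P$ with $T_i+P\neq0$ reconstructs a candidate solution through $S^3=\prod_i(T_i+P)/P^2$ and $x_i=PS/(T_i+P)$, which one verifies solves the system. So solutions of \eqref{THEQ} correspond to those roots of $\bar f$ for which the reconstructed $x_i$ satisfy $x_i+x_j>0$, and everything now hinges on root location together with the sign translation.

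Cases (i) and (ii) are then quick. In case (ii), $T_2=T_3=0$ forces $x_1=0$, leaving the one-parameter family $(x_2+x_3)x_2x_3=T_1$ with $x_2,x_3>0$; every member has $v_1v_2v_3=T_1/8$, so $c=8/T_1$ is constant while $g$ varies, giving infinitely many solutions with a common $c$. In case (i) the sign relations force all $x_i>0$, hence $P>0$; since $T_1+T_2+T_3>0$ the cubic is strictly increasing on $(0,\infty)$ with $\bar f(0)=-T_1T_2T_3<0$, so there is exactly one positive root, and any negative roots are discarded because they violate some $v_i>0$. This yields the stated unique solution.

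The main work, and the main obstacle, is case (iii). Here the sign relations force $x_1<0<x_2,x_3$, so $P<0$; moreover $T_2,T_3<0$ make $x_1+x_2$ and $x_1+x_3$ automatically positive, so the only binding positivity condition reduces to $T_1+P>0$, i.e. $P>-T_1$. The task becomes counting roots of $\bar f$ in $(-T_1,0)$. I would analyse the shape of $\bar f$ via $\bar f'(p)=2p\bigl(3p+(T_1+T_2+T_3)\bigr)$, locating the interior critical point at $p=-\tfrac13(T_1+T_2+T_3)$ with $\bar f\bigl(-\tfrac13(T_1+T_2+T_3)\bigr)=\tfrac1{27}(T_1+T_2+T_3)^3-T_1T_2T_3$, and combine this with $\bar f(0)=-T_1T_2T_3<0$ and the factorisation $\bar f(-T_1)=-T_1(T_1-T_2)(T_1-T_3)$. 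Because $\bar f(-T_1)<0$ and $-T_1<-\tfrac13(T_1+T_2+T_3)$ throughout this regime, both negative roots — when they exist — lie above $-T_1$ and are admissible (so the alternative $\bar f(-T_1)>0$ in the statement is never active here, and existence is governed by the first condition). The delicate point is to match existence and multiplicity precisely to the inequalities claimed: negative roots exist iff $T_1+T_2+T_3>0$ and $\bar f\bigl(-\tfrac13(T_1+T_2+T_3)\bigr)\ge0$, there are two admissible roots (hence two pairs $(g_1,c_1),(g_2,c_2)$ with $c_1\neq c_2$) when the inequality is strict, and a single double root (hence a unique solution) at equality. Carrying out this root-location argument rigorously, and confirming that no positive root of $\bar f$ produces an admissible metric, is the crux.
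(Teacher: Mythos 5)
Your existence analysis is essentially the paper's own route: the substitution $x_i$, the reduction to the cubic $\bar f(P)=0$ via $P=x_1x_2x_3$ (the paper phrases it as $f(p)=\sum p/(p+T_i)=1$ before clearing denominators, but it is the same correspondence $x_i=q/(p+T_i)$, $q^3=p\prod(p+T_i)$), and the root count on $(-T_1,0)$ using the critical point at $-\tfrac13(T_1+T_2+T_3)$. Your observation that the clause $\bar f(-T_1)>0$ is never active is correct and consistent with the paper, since $\bar f(-T_1)=-T_1(T_1-T_2)(T_1-T_3)<0$ whenever $T_1>0>T_2,T_3$. Two smaller points: in the two-root case you assert $c_1\neq c_2$ without justification; this needs the identity $\tfrac{x_i}{v_i}=\tfrac{2p}{T_i}$, which yields $8p^2=cT_1T_2T_3$ and hence distinct $c$ for distinct negative roots $p$. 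And your derivation of the cubic by ``summing the relations'' tacitly divides by $S=x_1+x_2+x_3$, so you should note $S=v_1+v_2+v_3>0$.

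The genuine gap is the uniqueness claim. Counting solutions of \eqref{THEQ} in one fixed basis establishes only condition (ii) of Lemma \ref{UNIQUE}; it does not rule out a second pair $(\bar g,\bar c)$ whose metric is diagonal in a \emph{different} basis satisfying \eqref{NICE} in which $T$ is also diagonal, but is not diagonal in your chosen basis. The paper closes this by classifying all admissible changes of basis (Lemma \ref{SO3Change}: they are $SO(3)$ matrices with $a_{ij}=0$ whenever $T_i\neq T_j$) and then checking, case by case on how many of $T_1,T_2,T_3$ coincide, that any solution metric satisfies $v_i=v_j$ whenever $T_i=T_j$ (or, in the $(+,0,0)$ case, that $T_1$ and hence $c=8/T_1$ is basis-independent even though $g$ is not unique), so that condition (i) of Lemma \ref{UNIQUE} holds. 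Without this step your argument proves existence and the count of solutions diagonal in the chosen basis, but not the uniqueness (or two-ness) of the pair $(g,c)$ asserted in the theorem.
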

All that follows in this section is the proof of 
Theorem \ref{SO3T}. 
To examine the existence component of Theorem \ref{SO3T}, in accordance with Lemma \ref{BS}, 
first we fix a basis $V_1,V_2,V_3$ in which $T$ is diagonal and look for solutions of 
\eqref{THEQc} or \eqref{THEQ}. Once we have analysed the solvability of \eqref{THEQc} or \eqref{THEQ}, we will then 
investigate uniqueness by examining new bases $X_1,X_2,X_3$ which satisfy the same constraints as $V_1,V_2,V_3$, and use Lemma \ref{UNIQUE}.

\subsection{Existence}
On $SO(3)$, the system of equations \eqref{THEQ} becomes 
\begin{align}\label{THEQSO3}
 (x_2+x_3)x_2x_3=T_1,\qquad
 (x_1+x_3)x_1x_3=T_2,\qquad
 (x_1+x_2)x_1x_2=T_3.
\end{align}
The case where $T$ is positive-definite, so $T_1,T_2,T_3>0$, was treated by Hamilton in~\cite{Hamilton2} and is presented as Lemma \ref{SO3+++}. 
\begin{lemma}\label{SO3+++}
 If $\lambda_1=\lambda_2=\lambda_3=2$ and $T_1,T_2,T_3>0$, \eqref{THEQSO3} has a unique solution $(v_1,v_2,v_3)$ such that $v_1,v_2,v_3>0$.
\end{lemma}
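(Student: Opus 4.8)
The plan is to work in the $x$-variables rather than the $v$-variables. With $\lambda_1=\lambda_2=\lambda_3=2$ we have $x_1=v_2+v_3-v_1$, $x_2=v_1+v_3-v_2$, $x_3=v_1+v_2-v_3$, so that $x_2+x_3=2v_1$, $x_1+x_3=2v_2$ and $x_1+x_2=2v_3$; this linear change of variables is invertible and identifies \eqref{THEQ} with \eqref{THEQSO3}. First I would observe that any positive solution forces $x_1,x_2,x_3>0$: since $v_1>0$, the first equation $(x_2+x_3)x_2x_3=T_1>0$ gives $x_2x_3>0$, and likewise $x_1x_3>0$ and $x_1x_2>0$, so the $x_i$ share a common sign; were they all negative we would have $x_2+x_3<0$, contradicting $v_1>0$. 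Conversely, any $x$ in the open positive octant yields $v_i>0$ automatically, since each $v_i$ is the average of two positive $x$'s. Hence the linear change of variables gives a bijection between positive solutions in $v$ and positive solutions in $x$, and it suffices to prove that \eqref{THEQSO3} has exactly one solution with $x_1,x_2,x_3>0$.

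The key structural feature I would exploit is that the $i$-th equation omits $x_i$: writing $f_1(x_2,x_3)=x_2x_3(x_2+x_3)$, $f_2(x_1,x_3)=x_1x_3(x_1+x_3)$ and $f_3(x_1,x_2)=x_1x_2(x_1+x_2)$, each $f_i$ depends on only two of the variables and is strictly increasing in each argument on the positive octant. I would then reduce to one variable. Fixing $x_1=t>0$, the equation $f_3(t,x_2)=T_3$ is the quadratic $t\,x_2^2+t^2x_2-T_3=0$, which has exactly one positive root $x_2=\phi(t)$ (its roots have product $-T_3/t<0$); similarly $f_2(t,x_3)=T_2$ has exactly one positive root $x_3=\psi(t)$. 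By the implicit function theorem, using $\partial f_i/\partial x_j>0$, the functions $\phi,\psi$ are smooth and strictly decreasing, and inspecting the quadratics gives $\phi(t),\psi(t)\to\infty$ as $t\to0^+$ and $\phi(t),\psi(t)\to0$ as $t\to\infty$.

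Finally I would substitute into the remaining equation. Setting $h(t)=f_1(\phi(t),\psi(t))=\phi(t)\psi(t)\bigl(\phi(t)+\psi(t)\bigr)$, a positive solution of \eqref{THEQSO3} with first coordinate $t$ exists precisely when $h(t)=T_1$, and every positive solution must have $x_2=\phi(t)$ and $x_3=\psi(t)$. Since $\phi$ and $\psi$ are strictly decreasing and $f_1$ is strictly increasing in both arguments, $h$ is continuous and strictly decreasing, with $h(t)\to\infty$ as $t\to0^+$ and $h(t)\to0$ as $t\to\infty$. By the intermediate value theorem there is exactly one $t^\ast>0$ with $h(t^\ast)=T_1$, and then $(t^\ast,\phi(t^\ast),\psi(t^\ast))$ is the unique positive solution; existence and uniqueness thus fall out together from the monotonicity of $h$.

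I do not expect a serious obstacle here: the hypotheses $T_1,T_2,T_3>0$ together with the equal structure constants $\lambda_1=\lambda_2=\lambda_3=2$ are exactly what pins all the $x_i$ to the positive octant and makes each $f_i$ monotone, so the problem collapses to a one-dimensional monotone equation. The only points requiring genuine care are the sign reduction in the first paragraph, which justifies restricting to $x_i>0$, and the endpoint limits of $\phi,\psi,h$ as $t\to0^+$ and $t\to\infty$ that feed the intermediate value theorem. (Equivalently, one could deduce bijectivity of $x\mapsto(f_1,f_2,f_3)$ on the positive octant from injectivity, properness and invariance of domain, but the explicit reduction above is cleaner.) It is precisely this monotone one-variable reduction that fails for indefinite $T$, which is why the later cases in the paper are harder.
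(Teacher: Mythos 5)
Your proof is correct, but it takes a genuinely different route from the one the paper uses. The paper does not reprove this lemma directly: it invokes Hamilton's argument, which converts \eqref{THEQSO3} into the single scalar equation \eqref{MAIN}, $f(p)=\sum_{i}p/(p+T_i)=1$. For $T_1,T_2,T_3>0$ each summand increases strictly from $0$ to $1$ on $(0,\infty)$, so there is a unique $p>0$ with $f(p)=1$, and \eqref{SO3p} with $q^3=p(p+T_1)(p+T_2)(p+T_3)$ recovers the unique positive solution (the degenerate branch $x_1x_2x_3(x_1+x_2+x_3)=0$ of the correspondence being ruled out by positivity, as the paper notes). You instead eliminate variables directly: the third and second equations determine $x_2=\phi(x_1)$ and $x_3=\psi(x_1)$ as the unique positive roots of quadratics, and the strictly decreasing composite $h=f_1(\phi,\psi)$ reduces the first equation to a one-variable monotone problem. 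Your sign reduction to the positive octant, the monotonicity of $\phi$, $\psi$ and $h$, and the endpoint limits feeding the intermediate value theorem are all checked correctly, so the argument is complete. The trade-off is that Hamilton's substitution is the one the paper reuses for the indefinite signature $(+,-,-)$ on $SO(3)$ (Lemma \ref{SO3+--}) and for $SL(2)$, where it becomes the cubic $\bar f(p)=0$; your elimination is more elementary and self-contained but, as you acknowledge, is tied to the positive-definite case, since for indefinite $T$ the quadratics can admit two positive roots and $h$ loses its monotonicity.
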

With Lemma \ref{SO3+++} and the equation $v_1v_2v_3c=1$, we can then recover a solution 
$(v_1,v_2,v_3,c)$ of \eqref{THEQc} which is unique up to the scaling of $(v_1,v_2,v_3)$. 

The proof of Lemma \ref{SO3+++} uses a correspondence between solutions of \eqref{THEQSO3}  
and solutions of the equation
\begin{equation}
 f(p)=\frac{p}{p+T_1}+\frac{p}{p+T_2}+\frac{p}{p+T_3}=1.\label{MAIN}
\end{equation}
To see this correspondence, let $p$ be a solution of \eqref{MAIN}
and then define $q$ by 
$q^3=p(p+T_1)(p+T_2)(p+T_3)$. Then 
\begin{equation}\label{SO3p}
 x_1=\frac{q}{p+T_1},\qquad x_2=\frac{q}{p+T_2},\qquad x_3=\frac{q}{p+T_3}
\end{equation}
is a solution of \eqref{THEQSO3}.

Conversely, if $x_1,x_2,x_3$ is a solution of \eqref{THEQSO3}, then let $p=x_1x_2x_3$ and $q=p(x_1+x_2+x_3)$. 
There are then two possibilities: either $x_1x_2x_3(x_1+x_2+x_3)=0$ or 
\begin{align*}
 x_1=\frac{x_1x_2x_3(x_1+x_2+x_3)}{x_2x_3(x_1+x_2+x_3)}=\frac{q}{p+T_1},\qquad
 x_2=\frac{q}{p+T_2},\qquad
 x_3=\frac{q}{p+T_3},
\end{align*} 
and $p$ satisfies \eqref{MAIN}. The condition that $T_1,T_2,T_3,v_1,v_2,v_3$ are all positive is incompatible with the 
equation $x_1x_2x_3(x_1+x_2+x_3)=0$, so this possibility is discarded, and solutions of \eqref{THEQSO3} are found using equation
\eqref{MAIN}.
 
Now we will examine other possibilities for the signature of $T$. From \eqref{THEQc} we can see that the only other possibilities for the signs of $T_1,T_2,T_3$, given our ordering conventions, are 
$T_1>0$ and $T_3\le T_2<0$, or $T_1>0$ and $T_2=T_3=0$. In the case that $T_1>0$ and $T_2=T_3=0$, we 
will not look at \eqref{THEQSO3}, but the original system \eqref{THEQc}.
\begin{lemma}\label{SO3+00}
 If $\lambda_1=\lambda_2=\lambda_3=2$, $T_1>0$ and $T_2=T_3=0$, then the system 
 of equations \eqref{THEQc}
has a solution $(v_1,v_2,v_3,c)$ such that $v_1,v_2,v_3>0$. There are infinitely many solutions, 
but $c=\frac{8}{T_1}$ for all solutions.
\end{lemma}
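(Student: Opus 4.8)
The plan is to work directly with the original system \eqref{THEQc} rather than \eqref{THEQSO3}, as anticipated in the text, exploiting the fact that two of the prescribed eigenvalues vanish. With $\lambda_1=\lambda_2=\lambda_3=2$, the auxiliary quantities simplify to $x_1=v_2+v_3-v_1$, $x_2=v_1+v_3-v_2$ and $x_3=v_1+v_2-v_3$. Substituting $T_2=T_3=0$ into \eqref{THEQc} and using that every $v_i$ is positive, the second and third equations reduce to the pair of conditions $x_1x_3=0$ and $x_1x_2=0$, while the first reads $\frac{2x_2x_3}{v_2v_3}=cT_1$.

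First I would carry out the elementary case analysis forced by $x_1x_3=0$ and $x_1x_2=0$: either $x_1=0$, or else $x_1\neq 0$ forces $x_2=x_3=0$. The second case I would dispose of immediately, since $x_2=x_3=0$ gives $v_1+v_3-v_2=0$ and $v_1+v_2-v_3=0$, whose sum yields $2v_1=0$, contradicting $v_1>0$. Hence every admissible solution must satisfy $x_1=0$, i.e. $v_1=v_2+v_3$.

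Next I would substitute the constraint $v_1=v_2+v_3$ back into the remaining quantities, obtaining $x_2=2v_3$ and $x_3=2v_2$, both automatically positive. Feeding these into the first equation collapses it to $\frac{2(2v_3)(2v_2)}{v_2v_3}=8=cT_1$, so $c=\frac{8}{T_1}$ is forced, and in particular $c>0$ since $T_1>0$. Conversely, any choice of $v_2,v_3>0$ together with $v_1=v_2+v_3$ and $c=8/T_1$ solves \eqref{THEQc}, which establishes existence and simultaneously the claim that $c$ is the same for every solution.

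Finally, to justify the assertion of infinitely many solutions under our uniqueness convention (equivalence only up to common scaling of the triplet $(v_1,v_2,v_3)$), I would observe that the ratio $v_2:v_3$ is a free parameter in the family above and is left unchanged by scaling; hence the family descends to an infinite, genuinely inequivalent collection of solutions. There is no real analytic obstacle in this lemma: all of its content is carried by the short case analysis ruling out $x_2=x_3=0$ and pinning the constant to $8/T_1$. The only point requiring a moment of care is confirming that the surviving one-parameter family is not collapsed by the scaling equivalence, which is what makes the ``infinitely many solutions'' statement meaningful.
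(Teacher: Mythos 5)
Your proposal is correct and follows essentially the same route as the paper: reduce to the dichotomy $x_1=0$ versus $x_2=x_3=0$, discard the latter since it forces $v_1=0$, and then observe that $v_1=v_2+v_3$ gives $x_2=2v_3$, $x_3=2v_2$ and pins $c=8/T_1$. The only addition is your explicit check that the free ratio $v_2:v_3$ survives the scaling equivalence, which the paper leaves implicit.
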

\begin{proof}
Since $T_2=T_3=0$, \eqref{THEQc} implies that $x_1=0$, or $x_2=x_3=0$. We cannot have 
$x_2=x_3=0$ because then $v_1$ would be $0$. Therefore, $x_1=v_2+v_3-v_1=0$, which implies that 
$x_2=2v_3$ and $x_3=2v_2$. In this case, the first equation of 
\eqref{THEQc} becomes $8=cT_1$ so $c$
must be chosen accordingly, and is the same for all solutions. 

However, the constraint $x_1=0$ only implies that $v_1=v_2+v_3$, so any 
$(v_2+v_3,v_2,v_3,\frac{8}{T_1})$ is a solution of \eqref{THEQc}.
\end{proof}

In the second case that $T_1>0$ and $T_3\le T_2<0$, we know from \eqref{THEQc} that none of $x_1,x_2$ or $x_3$ can be $0$ and $x_1+x_2+x_3=v_1+v_2+v_3>0$. 
Therefore, solutions of 
\eqref{THEQSO3} are in correspondence with solutions of \eqref{MAIN} so we will
look for solutions of \eqref{MAIN} as Hamilton did in Lemma \ref{SO3+++}. 
\begin{lemma} \label{SO3+--}
 If $\lambda_1=\lambda_2=\lambda_3=2$, $T_1>0$ and $T_2,T_3<0$, let 
 $\bar{f}:\mathbb{R}\to\mathbb{R}$ be defined as 
 \begin{equation}
  \bar{f}(p)=2p^3+(T_1+T_2+T_3)p^2-T_1T_2T_3.
 \end{equation}
Then \eqref{THEQSO3} has a solution for 
 $(x_1,x_2,x_3)$ such that $v_1,v_2,v_3>0$ if and only if 
 
 (i) $\bar{f}(-\frac{T_1+T_2+T_3}{3})=\frac{1}{27}(T_1+T_2+T_3)^3-T_1T_2T_3\ge 0$ while $-\frac{T_1+T_2+T_3}{3}<0$, or 
 
 (ii) $\bar{f}(-T_1)=-T_1(T_1-T_2)(T_1-T_3)>0$.
 
 The solution is unique unless 
  $\bar{f}(-T_1)<0$, $-\frac{T_1+T_2+T_3}{3}<0$ and $\bar{f}(-\frac{T_1+T_2+T_3}{3})>0$, in which case there 
  are two solutions.
\end{lemma}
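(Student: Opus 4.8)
The plan is to use the correspondence, already set up above, between solutions $(x_1,x_2,x_3)$ of \eqref{THEQSO3} with $v_1,v_2,v_3>0$ and roots $p$ of $\bar f$. Since any such solution has $x_1x_2x_3(x_1+x_2+x_3)\neq0$, it is produced through \eqref{SO3p} by a solution $p$ of \eqref{MAIN}, i.e. a root of $\bar f$ that is not a pole of the left-hand side of \eqref{MAIN}; conversely each such root yields via \eqref{SO3p} a triple solving \eqref{THEQSO3}. Hence the whole lemma reduces to counting the roots of the cubic $\bar f$ that correspond to an honest metric, and the first task is to pin down which roots those are.

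To do that I would convert the positivity requirement into a constraint on the location of $p$. Writing $q$ for the real cube root with $q^3=p(p+T_1)(p+T_2)(p+T_3)$, and using \eqref{MAIN} in the form $\sum_i(p+T_i)^{-1}=p^{-1}$, a short computation gives $v_i=\frac{qT_i}{2p(p+T_i)}$, so that $\operatorname{sign}(v_i)=\operatorname{sign}(T_i)\,\operatorname{sign}\!\big((p+T_j)(p+T_k)\big)$ for $\{i,j,k\}=\{1,2,3\}$. With $T_1>0$ and $T_2,T_3<0$ the conditions $v_1,v_2,v_3>0$ read $(p+T_2)(p+T_3)>0$, $(p+T_1)(p+T_3)<0$ and $(p+T_1)(p+T_2)<0$. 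The poles are ordered $-T_1<0<-T_2\le-T_3$, and checking the signs on the four intervals they cut out shows that all three inequalities hold simultaneously exactly on $(-T_1,-T_2)$. Thus a root $p$ of $\bar f$ gives an admissible metric if and only if $p\in(-T_1,-T_2)$, and the lemma becomes the algebraic problem of counting the roots of $\bar f$ in this interval.

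Now I would analyse $\bar f$ on $(-T_1,-T_2)$. The endpoint values factor as $\bar f(-T_1)=-T_1(T_1-T_2)(T_1-T_3)$ and $\bar f(-T_2)=-T_2(T_2-T_1)(T_2-T_3)$, and under the present signs both are $\le0$; in particular $\bar f(-T_1)<0$ always, so the alternative ``$\bar f(-T_1)>0$'' of condition (ii) is never realised and the count is governed entirely by the interior of the interval. The critical points solve $\bar f'(p)=2p(3p+T_1+T_2+T_3)=0$, i.e. $p=0$ (with $\bar f(0)=-T_1T_2T_3<0$) and $p=-\tfrac{T_1+T_2+T_3}{3}$, where $\bar f\big(-\tfrac{T_1+T_2+T_3}{3}\big)=\tfrac{1}{27}(T_1+T_2+T_3)^3-T_1T_2T_3$. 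As the leading coefficient is positive, the smaller critical point is the local maximum; $0$ always lies in $(-T_1,-T_2)$, while $-\tfrac{T_1+T_2+T_3}{3}$ lies there and is the local maximum exactly when $-\tfrac{T_1+T_2+T_3}{3}<0$. Since both endpoints are negative the number of roots in the interval is even, hence $0$ or $2$, and it is nonzero precisely when the interior local maximum is nonnegative, i.e. when $-\tfrac{T_1+T_2+T_3}{3}<0$ and $\bar f\big(-\tfrac{T_1+T_2+T_3}{3}\big)\ge0$; this is condition (i). A monotonicity check on $(-T_1,-\tfrac{T_1+T_2+T_3}{3})$ and $(-\tfrac{T_1+T_2+T_3}{3},0)$ then shows both zeros lie in $(-T_1,-T_2)$ when the maximum is strictly positive (the third root of $\bar f$ is forced to $-T_2$ or beyond because $\bar f(-T_2)\le0$), giving two metrics, while a maximum equal to zero gives a single double root and a unique metric, which is exactly the stated dichotomy.

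The step I expect to be most delicate is the bookkeeping in the middle two paragraphs: deriving the interval $(-T_1,-T_2)$ without sign errors, and then verifying in the two-root case that both roots really stay inside the interval instead of one crossing a pole. The borderline configurations also deserve separate, if routine, care: the tangency $\bar f\big(-\tfrac{T_1+T_2+T_3}{3}\big)=0$, which collapses the two roots into one, and the coincidence $T_2=T_3$, which makes $-T_2=-T_3$ a double pole and $\bar f(-T_2)=0$, so that the right endpoint must be handled by a limiting argument.
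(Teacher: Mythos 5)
Your proposal is correct and follows essentially the same route as the paper: pass to \eqref{MAIN}, translate the positivity of $v_1,v_2,v_3$ into a localisation of the root $p$, and count roots of the cubic $\bar f$ in that range using its critical points at $0$ and $-\frac{T_1+T_2+T_3}{3}$. The only superficial discrepancy is that your sign analysis of the $v_i$ localises $p$ to $(-T_1,-T_2)$, whereas the paper (arguing via the signs of $x_1,x_2,x_3$) obtains $(-T_1,0)$; the two are reconciled because $\bar f<0$ on $[0,-T_2)$, so both intervals contain exactly the same roots of $\bar f$ and your count agrees with the paper's. Your further observation that $\bar f(-T_1)=-T_1(T_1-T_2)(T_1-T_3)$ is automatically negative under the hypotheses, so that alternative (ii) of the lemma is vacuous, is also correct and does not affect the equivalence being proved.
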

\begin{remark}
It is easily verified that both conditions (i) and (ii) are achieved by some 
$T$. Also, if, for instance, 
$T_1=10$ and $T_2=T_3=-1$, then $\bar{f}$ satisfies the conditions to have two solutions, so non-uniqueness is attainable. 
\end{remark}  
\begin{proof}
For this proof, we will find a solution $p$ of \eqref{MAIN}. However, first we need to find under what circumstances the 
solution will result in $v_1,v_2,v_3$ all being positive. 

We can see that if we have a solution of \eqref{THEQSO3}, then $v_1,v_2,v_3$ are positive if and only if $x_2$ and $x_3$ have the same sign, and 
$x_1$ has a different sign. If this is the case, the numbers $x_2$ and $x_3$ must be positive because $x_2+x_3=2v_1$. 
Once we find a solution of \eqref{MAIN}, the numbers $x_1,x_2,x_3$ solving \eqref{THEQSO3} are given by \eqref{SO3p}
where $q^3=p(p+T_1)(p+T_2)(p+T_3)$. 
Therefore, the solution of \eqref{THEQSO3} has $x_2$ and $x_3$ positive and $x_1$ negative if and only if the corresponding solution $p$ of~\eqref{MAIN} 
satisfies
$-T_1<p<0<-T_2\le -T_3$. 

Solving \eqref{MAIN} on the interval $(-T_1,0)$ is equivalent to solving the equation 
\begin{equation}\label{MAIN'}
\bar{f}(p)=2p^3+(T_1+T_2+T_3)p^2-T_1T_2T_3=0
\end{equation}
on the interval $(-T_1,0)$. Since this is a cubic polynomial, $\bar{f}$ has at most two critical points and will be
monotone on any interval not containing a critical point. Therefore, to evaluate how many
solutions $p$ of \eqref{MAIN'} exist in the interval $(-T_1,0)$, it suffices to check the value of 
$\bar{f}(p)$ at $-T_1$, $0$ and the critical points. After straightforward computation, we conclude that 
a solution exists if and only if $(i)$ or $(ii)$ holds, and that 
the solution is unique unless 
$\bar{f}(-T_1)<0$, $-\frac{T_1+T_2+T_3}{3}<0$ and $\bar{f}(-\frac{T_1+T_2+T_3}{3})>0$. 

\end{proof}
Once we have a $(v_1,v_2,v_3)$ solution of \eqref{THEQSO3} with Lemma \ref{SO3+--}, this triplet alongside 
the positive constant $c$ found with $v_1v_2v_3c=1$ is then a solution of \eqref{THEQc}. This solution of 
\eqref{THEQc} is unique up to the scaling of $v_1,v_2,v_3$. 

We have established that in some instances, there are two solutions of \eqref{THEQSO3}. This means that we have two different solutions 
$(v_1,v_2,v_3,c)$ of \eqref{THEQc}.  
We claim that the two values for $c$ in these two solutions are different. Indeed, if we 
have a solution of \eqref{THEQSO3} then we have a solution of \eqref{THEQc} such that $v_1v_2v_3c=1$. 
If we multiply each of the equations in \eqref{THEQc} by $\frac{x_i}{v_i}$, since $x_1x_2x_3=p$ and $v_1v_2v_3c=1$, we find that 
for each $i$, $\frac{x_i}{v_i}=\frac{2p}{T_i}$. Putting these 
relations back into any one of the three equations of \eqref{THEQc} reveals that $8p^2=cT_1T_2T_3$ so $c$ must be chosen 
accordingly. Since the two different solutions came 
about because there were two different negative values of $p$, we must get two different associated values for $c$. 

\subsection{Uniqueness}
We have now concluded the existence component of 
Theorem \ref{SO3T}. To address the issue of 
uniqueness we will check the hypothesis of Lemma \ref{UNIQUE}. To do this
we need to consider the non-uniqueness of bases which satisfy \eqref{NICE} in which $T$ is diagonal and $T_1\ge T_2\ge T_3$ is satisfied. 
\begin{lemma}\label{SO3Change}
 Let $V_1,V_2,V_3$ be a basis which satisfies \eqref{NICE} in which $T$ is diagonal with $T_1\ge T_2\ge T_3$. If $X_1,X_2,X_3$ is another basis 
 in which the same properties hold, then 
 $X_i=\sum_{j=1}^{3}a_{ji}V_j$ for some $SO(3)$ matrix $(a_{ij})_{i,j=1}^3$  such that 
 $a_{ij}=0$ whenever $T_i\neq T_j$. We also have 
 $T(X_i,X_i)=T(V_i,V_i)=T_i$ for $i=1,2,3$. 
\end{lemma}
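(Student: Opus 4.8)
```latex
\textbf{Proof proposal for Lemma \ref{SO3Change}.}

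The plan is to exploit the fact that on $SO(3)$ all three structure constants
coincide, $\lambda_1=\lambda_2=\lambda_3=2$, so that the bracket relations
\eqref{NICE} are highly symmetric. The first step is to understand which changes
of basis preserve the structural form \eqref{NICE}. If $V_1,V_2,V_3$ satisfies
\eqref{NICE} with all $\lambda_k=2$ and we write $X_i=\sum_j a_{ji}V_j$, then I
would compute $[X_i,X_j]$ directly, expanding in the $V_k$ basis and using
$[V_p,V_q]=2\sum_r\epsilon_{pqr}V_r$. The requirement that the $X_i$ also obey
\eqref{NICE} with the same $\lambda$-values forces a relation on the matrix
$A=(a_{ij})$. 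Because the bracket on $\mathfrak{so}(3)$ is (up to the factor $2$)
the cross product on $\mathbb{R}^3$, and the cross product is preserved exactly by
$SO(3)$ (orientation-preserving orthogonal maps), the condition that $A$ preserve
\eqref{NICE} should reduce precisely to $A\in SO(3)$. This is the cleanest way to
pin down the allowed basis changes, and I expect it to be the main conceptual
step.

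Next I would use the diagonality of $T$ in both bases. Since $T$ is diagonal in
the $V$-basis with entries $T_1,T_2,T_3$, the components of $T$ in the $X$-basis
are $T(X_i,X_j)=\sum_{p,q}a_{pi}a_{qj}T(V_p,V_q)=\sum_p a_{pi}a_{pj}T_p$, i.e.
$(A^\top D A)_{ij}$ where $D=\operatorname{diag}(T_1,T_2,T_3)$. The hypothesis
that $T$ is also diagonal in the $X$-basis with the same ordering $T_1\ge T_2\ge
T_3$ means $A^\top D A$ is diagonal with the same diagonal entries. The key
observation is that diagonalizing a symmetric matrix by an orthogonal conjugation
while keeping the eigenvalues \emph{in the same positions} severely restricts $A$:
within each eigenspace $A$ may act by an arbitrary orthogonal (here, since
$A\in SO(3)$, rotational) transformation, but it cannot mix distinct eigenvalues.
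Concretely, comparing the off-diagonal entries of $A^\top D A$ and using that
$D$ has $T_i$ on the diagonal, one finds $a_{ij}=0$ whenever $T_i\ne T_j$; I would
verify this by examining $(A^\top D A)_{ij}=\sum_p a_{pi}a_{pj}T_p$ and the
orthonormality relations $\sum_p a_{pi}a_{pj}=\delta_{ij}$ inherited from
$A\in SO(3)$, which together force the columns associated to distinct eigenvalues
to be orthogonal and supported on disjoint index sets.

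Finally, once the block structure $a_{ij}=0$ for $T_i\ne T_j$ is established, the
identity $T(X_i,X_i)=\sum_p a_{pi}^2 T_p=T_i$ follows immediately: the sum runs
only over those $p$ with $T_p=T_i$, and $\sum_p a_{pi}^2=1$ by orthonormality of
the columns of $A$, so $T(X_i,X_i)=T_i\sum_p a_{pi}^2=T_i$. The hard part will be
the first step, namely proving rigorously that preservation of \eqref{NICE} with
$\lambda_1=\lambda_2=\lambda_3=2$ is equivalent to $A\in SO(3)$ rather than merely
$A\in O(3)$ or some larger group; the orientation constraint comes from the sign
of the Levi-Civita symbol, so I would be careful to track that an
orientation-reversing $A$ would send $\lambda_k=2$ to $\lambda_k=-2$ and hence
violate \eqref{NICE} in the prescribed form. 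The remaining steps are routine
linear algebra once the $SO(3)$ characterization is in hand.
```
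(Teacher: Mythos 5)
Your proposal is correct and takes essentially the same route as the paper: the paper likewise identifies the bracket with (twice) a cross product to conclude that the change-of-basis matrix lies in $SO(3)$, and then treats the block condition $a_{ij}=0$ for $T_i\neq T_j$ and the invariance of the diagonal entries as routine linear algebra, exactly as you do. Your write-up is in fact more detailed than the paper's brief sketch, and the supporting points you flag (the decreasing ordering forcing $A^{\top}DA=D$, and orientation-reversal flipping the sign of the $\lambda_k$, hence $\det A=1$) are the right ones.
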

\begin{proof}
Since the change preserves the 
Lie bracket relations \eqref{NICE}, the matrix $a=(a_{ij})_{i,j=1}^3$ must be an $SO(3)$ matrix. There are various arguments available to show this. One way is by creating a cross product on the 
Lie algebra which is the same as the Lie bracket. We can then use the orthogonality properties of the cross product to obtain the result. 

It is then straightforward to show that for $T$ to remain diagonal, we must have $a_{ij}=0$ whenever $T_i\neq T_j$, and 
the representation of $T$ is the same in the basis $X_1,X_2,X_3$. 
\end{proof}

We are now in a position to prove the uniqueness part of Theorem \ref{SO3T}. Suppose we have a tensor field $T$ which is diagonal 
in a $V_1,V_2,V_3$ basis with entries 
$T_1,T_2,T_3$. We need to check that the hypotheses of Lemma \ref{UNIQUE} hold, so 
we need to establish 
whether or not the metrics $g$ of our solutions $(g,c)$ 
are diagonal in other bases in which 
$T$ is diagonal and \eqref{NICE} is satisfied. We do this by cases of how many of $T_1$, $T_2$ and $T_3$ are identical. 

\vspace{0.3cm}

\noindent 
$\textbf{Case}$ $\textbf{One}$- $T_1$, $T_2$ and $T_3$ are pairwise distinct. 

In this case, Lemma \ref{SO3Change} implies that the only changes of basis which 
preserve the relations \eqref{NICE} are scalings of the $V_1,V_2,V_3$ basis so 
any diagonal metric in the $V_1,V_2,V_3$ basis is trivially diagonal in the new basis. 
Therefore, Lemma \ref{UNIQUE}, coupled with Remark \ref{+--an}, implies that the uniqueness of solutions 
is completely determined by the uniqueness of solutions of \eqref{THEQSO3}. 

\vspace{0.3cm}

\noindent 
$\textbf{Case}$ $\textbf{Two}$- Exactly two of $T_1$, $T_2$, $T_3$ are the same, say $T_i=T_j$ and let $k$ be the remaining index. 
In this instance, Lemma \ref{SO3Change} tells us that our change of basis is an $SO(3)$ matrix such that $a_{ik}=a_{jk}=a_{ki}=a_{kj}=0$, so 
the representation of $T$ is the same.

Now since $T_i=T_j$, any solution $(v_1,v_2,v_3)$ of 
\eqref{THEQSO3} must satisfy $v_i=v_j$ or $x_k=v_i+v_j-v_k=0$. 
In the case that $x_k=v_i+v_j-v_k=0$ we find that two of $T_1,T_2,T_3$ are $0$. By our ordering convention 
this means that we are in the situation where $x_1=0$, $T_1>0$ and $T_2=T_3=0$. The solutions of $Ric(g)=cT$ are non-unique because 
the solutions of \eqref{THEQc} are non-unique. However, we note that $T(X_1,X_1)=T_1$ for any change of basis 
which preserves \eqref{NICE} in which $T$ remains diagonal. Since the solution of \eqref{THEQc} satisfies 
$c=\frac{8}{T_1}$, the value of $c$ is the same for all solutions. 

In the case that $v_i=v_j$ and none of $T_1,T_2,T_3$ is $0$, computation reveals that the change of basis leaves the metric diagonal. Therefore, Lemma \ref{UNIQUE} tells us that the pair $(g,c)$ solving $Ric(g)=cT$ is unique, or, by Remark 
\ref{+--an}, there are two solutions if 
$T$ has signature $(+,-,-)$ and $T$ satisfies the conditions of Lemma \ref{SO3+--} for two solutions.

\vspace{0.3cm}

\noindent 
$\textbf{Case}$ $\textbf{Three}$- $T_1=T_2=T_3$. In this case, all three of $T_1,T_2,T_3$ must be positive to have a solution of \eqref{THEQSO3}. In this case, similarly to the case 
that two of $T_1,T_2,T_3$ are the same, we find that 
$\frac{x_1}{v_1}=\frac{x_2}{v_2}=\frac{x_3}{v_3}$ and since none of $x_1,x_2,x_3$ can be $0$, our solution of \eqref{THEQSO3} 
must also satisfy $v_1=v_2=v_3$. Therefore any change of basis that leaves $T$ diagonal will also leave the metric solution of $Ric(g)=cT$ diagonal and 
Lemma \ref{UNIQUE} can be applied. 

This treats the issue of uniqueness and concludes our analysis on $SO(3)$. 
\section{The Special Linear Group $SL(2)$}
For $SL(2)$, we have $\lambda_1=\lambda_2=2$ and $\lambda_3=-2$. The main result of this section 
is presented in the following theorem. 
 \begin{theorem}\label{SL2T}
Let $T$ be a left-invariant $(0,2)$ tensor on $SL(2)$. There 
exists a left-invariant Riemannian metric $g$ and a positive constant $c>0$ such that 
$Ric(g)=cT$ if and only if $T$ is diagonalisable with $T=diag(T_1,T_2,T_3)$ in a basis satisfying \eqref{NICE} with 
$\lambda_1=\lambda_2=2$, $\lambda_3=-2$ and one of the following conditions is satisfied:

(i) $T_1>0$, $T_2,T_3<0$, $-T_1<T_3$

(ii) $T_2>0$, $T_1,T_3<0$, $-T_2<T_3$

(iii) $T_1,T_2<0$, $T_3>0$, $\max\{-T_1,-T_2\}<T_3$

(iv) $T_1,T_2<0$, $T_3>0$, $\min\{-T_1,-T_2\}>T_3$

(v) $T_1=T_2=-T_3<0$

(vi) $T_1<0$, $T_2=T_3=0$

(vii) $T_2<0$, $T_1=T_3=0$.

In the cases (i)-(iv), the solution $(g,c)$ of $Ric(g)=cT$ is unique. In (v)-(vii), there are infinitely many 
solutions $(g,c)$, but 
$c$ is the same for all solutions. 
\end{theorem}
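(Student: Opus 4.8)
The plan is to repeat, for the structure constants $\lambda=(2,2,-2)$, the programme carried out for $SO(3)$ in Section~4, and then to settle uniqueness through Lemmas~\ref{BS} and~\ref{UNIQUE}. First I would specialise \eqref{THEQ}. From \eqref{NICE} one gets $x_1=v_2-v_3-v_1$, $x_2=v_1-v_3-v_2$, $x_3=v_1+v_2+v_3$, so that $v_1=(x_2+x_3)/2$, $v_2=(x_1+x_3)/2$, $v_3=-(x_1+x_2)/2$, and \eqref{THEQ} becomes
\[ (x_2+x_3)x_2x_3=T_1,\qquad (x_1+x_3)x_1x_3=T_2,\qquad -(x_1+x_2)x_1x_2=T_3, \]
which is the $SO(3)$ system \eqref{THEQSO3} with the sign of the third equation reversed. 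Positivity of $v_1,v_2,v_3$ is equivalent to $x_2+x_3>0$, $x_1+x_3>0$, $x_1+x_2<0$; adding the first two inequalities and invoking the third forces $x_3>0$. Enumerating the sign patterns of $(x_1,x_2,x_3)$ compatible with these constraints then produces exactly the three nondegenerate signatures $(+,-,-)$, $(-,+,-)$, $(-,-,+)$ of $(T_1,T_2,T_3)$ (cases (i)--(v)) together with the two patterns $x_1=0$ and $x_2=0$ (the pattern $x_3=0$ being ruled out), which give the degenerate signatures of cases (vi) and (vii).

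For the nondegenerate solutions I would set $p=x_1x_2x_3$ and $q=ps$ with $s:=x_1+x_2+x_3$, and reproduce the derivation of \eqref{MAIN} to obtain $x_1=q/(p+T_1)$, $x_2=q/(p+T_2)$, $x_3=q/(p-T_3)$ whenever $s\neq0$. Imposing the consistency relations $x_1x_2x_3=p$ and $\sum x_i=q/p$ collapses the system to the single cubic
\[ h(p):=2p^3+(T_1+T_2-T_3)p^2+T_1T_2T_3=0, \]
the $SL(2)$ analogue of \eqref{MAIN'}. As in the proof of Lemma~\ref{SO3+--}, the positivity and sign constraints confine $p$ to an open interval $I$ with endpoints among $-T_1,-T_2,T_3,0$ determined by the signature; for $(-,-,+)$ this interval, and hence the existence condition, splits according to whether $T_3$ exceeds, lies below, or equals both of $-T_1,-T_2$, producing cases (iii), (iv) and the boundary (v). Since $h$ is cubic, the root count on $I$ follows from its values at the endpoints and critical points; for cases (i)--(iv) the endpoint values factor (for instance $h(-T_1)=-T_1(T_1-T_2)(T_1+T_3)$), and the pole-ordering inequalities in (i)--(iv) are exactly the conditions under which $h$ has a single root in $I$, giving existence together with uniqueness of the solution of \eqref{THEQ}.

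The boundary case (v), $T_1=T_2=-T_3$, escapes this correspondence: there the poles $-T_1,-T_2,T_3$ coincide, the representation $x_i=q/(p+a_i)$ becomes $0/0$, and the admissible solutions are precisely those with $s=0$. I would treat it directly from \eqref{THEQc} in the spirit of Lemma~\ref{SO3+00}: setting $v_3=v_1+v_2$ gives $x_1=-2v_1$, $x_2=-2v_2$, $x_3=2(v_1+v_2)$ and forces $T_1=T_2=-T_3=-8v_1v_2(v_1+v_2)$, while conversely every $v_1,v_2>0$ with this product fixed solves \eqref{THEQc}; back-substitution yields $c=-8/T_1$ for all of them, so there are infinitely many solutions sharing one value of $c$. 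The degenerate cases (vi) and (vii) are handled by the same direct argument as Lemma~\ref{SO3+00}: the vanishing of two components forces $x_1=0$ (resp. $x_2=0$), producing a one-parameter family of metrics with $c=-8/T_1$ (resp. $c=-8/T_2$).

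Finally, for the uniqueness claims in (i)--(iv) I would check the hypotheses of Lemma~\ref{UNIQUE} through the $SL(2)$ analogue of Lemma~\ref{SO3Change}. The changes of basis preserving \eqref{NICE} with $\lambda=(2,2,-2)$ form a subgroup of $O(2,1)$, the Lorentzian analogue of the $SO(3)$ of Lemma~\ref{SO3Change}, block-structured by the coincidences among $T_1,T_2,T_3$, whose only continuous compact freedom is an $SO(2)$ rotation of the $V_1$--$V_2$ plane, available precisely when $T_1=T_2$. When $T_1,T_2,T_3$ are distinct only discrete transformations survive and a diagonal metric stays diagonal; when $T_1=T_2$ I would show that any nondegenerate solution satisfies $v_1=v_2$, the only alternative permitted by the equations being $v_1+v_2=v_3$, i.e.\ the excluded $s=0$ locus, so that the metric is fixed by the rotation and again remains diagonal. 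Lemma~\ref{UNIQUE} then promotes the uniqueness of the solution of \eqref{THEQ} to uniqueness of $(g,c)$. I expect the main obstacle to be the bookkeeping behind the second paragraph: pinning down the interval $I$ for each sign pattern, including the sign of $q$ and the ordering of the poles, and recognising that the boundary case (v) leaves the generic cubic picture and must be analysed separately through the $s=0$ family.
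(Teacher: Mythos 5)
Your existence analysis follows the paper's route essentially verbatim: the specialisation of \eqref{THEQ}, the correspondence with the rational equation \eqref{MAIN2} and its cubic form $2p^3+(T_1+T_2-T_3)p^2+T_1T_2T_3=0$, the localisation of the admissible root in an interval cut out by the poles $-T_1,-T_2,T_3$, and the direct treatment of the families with $x_1+x_2+x_3=0$ or $x_i=0$ (cases (v)--(vii)) all match Lemmas \ref{SL2+--}, \ref{SL2--+}, \ref{SL2--+=} and \ref{SL20}, and your computations (e.g.\ $h(-T_1)=-T_1(T_1-T_2)(T_1+T_3)$ and $c=-8/T_1$ in case (v)) are correct.

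The gap is in the uniqueness paragraph. You model the stabiliser of a diagonal $T$ on the $SO(3)$ picture of Lemma \ref{SO3Change}, ``block-structured by the coincidences among $T_1,T_2,T_3$,'' so that the only continuous freedom is a rotation available when $T_1=T_2$ and ``only discrete transformations survive'' when the $T_i$ are distinct. That is false for the Lorentzian automorphism group. The hyperbolic change $Y_2=\cosh\phi\,V_2-\sinh\phi\,V_3$, $Y_3=-\sinh\phi\,V_2+\cosh\phi\,V_3$ preserves \eqref{NICE} for every $\phi$, and it keeps $T$ diagonal (with the same components) precisely when $T_2+T_3=0$ --- not when $T_2=T_3$ --- while it destroys the diagonality of $g$, since $g(Y_2,Y_3)=-\sinh\phi\cosh\phi\,(v_2+v_3)\neq 0$. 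So the dangerous continuous freedoms are governed by the vanishing of $T_1+T_3$ and $T_2+T_3$ and can occur with $T_1,T_2,T_3$ pairwise distinct; the same applies to the continuous family $a_{12}^2-a_{13}^2=1$ in the third factor of the decomposition \eqref{SL2CE}, which is not discrete. To close the argument you must show that the existence conditions (i)--(iv) force $T_1+T_3\neq 0$ and $T_2+T_3\neq 0$ (for instance, in case (iii) the inequality $-T_2<T_3$ gives $T_2+T_3>0$), so that these boosts are forced to be trivial and only then does a diagonal metric stay diagonal; the equalities $T_1+T_3=T_2+T_3=0$ occur exactly in case (v), where the surviving boosts are consistent with the non-uniqueness you derive from the $s=0$ family. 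This is precisely the content of the paper's Lemmas \ref{SL2C} and \ref{SL2S} and of the hypothesis $T_2+T_3\neq 0$ in its final lemma, and it is the step your sketch is missing.
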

What follows in this section is the proof of Theorem \ref{SL2T}. 
To prove the existence component of this theorem, 
we will use Lemma \ref{BS}, so we fix a basis $V_1,V_2,V_3$ in which $T$ is diagonal and \eqref{NICE} is satisfied. 
Once such a basis is found, we look for solutions of \eqref{THEQc} or 
\eqref{THEQ}.
To examine the uniqueness component of Theorem \ref{SL2T}, 
we will then consider other bases $X_1,X_2,X_3$ which satisfy \eqref{NICE} in which $T$ is also diagonal in order to examine the uniqueness 
component of Theorem~\ref{SL2T} in accordance with Lemma \ref{UNIQUE}.

\subsection{Existence}
Similarly to what we found for $SO(3)$, solutions of \eqref{THEQ} are in correspondence with solutions of 
\begin{equation}
 f(p)=\frac{p}{p+T_1}+\frac{p}{p+T_2}+\frac{p}{p-T_3}=1\label{MAIN2}.
\end{equation}
The correspondence is described with  
\begin{equation}\label{x_1x_2x_3}
 x_1=\frac{q}{p+T_1},\qquad
 x_2=\frac{q}{p+T_2},\qquad
 x_3=\frac{q}{p-T_3}
\end{equation}
where $q$ is given by
$q^3=p(p+T_1)(p+T_2)(p-T_3)$. Similarly to $SO(3)$, the correspondence is one-to-one unless there is a solution of \eqref{THEQ} satisfying $x_1x_2x_3(x_1+x_2+x_3)=0$. 

Now to prove the existence component of Theorem \ref{SL2T}, we need to solve equation~\eqref{THEQc} or equation \eqref{THEQ}. 
We will treat these by the signs of $T_1,T_2,T_3$. Also note that when solving \eqref{THEQ}, 
we will use the equivalence between 
\eqref{THEQ} and \eqref{MAIN2} that we just discussed. 

\vspace{0.3cm}
\noindent 
$\textbf{Case}$ $\textbf{One}$- none of $T_1,T_2,T_3$ is $0$. 

We can see from \eqref{THEQc} that we cannot have any of $x_1,x_2,x_3$ being $0$, and 
in order for $v_1,v_2,v_3$ to be positive, two of $T_1,T_2,T_3$ are negative and the third is positive. 
Furthermore, if $x_1+x_2+x_3=0$, then $v_3=v_1+v_2$ and \eqref{THEQc} implies that 
$T_1=T_2=-T_3$. 
Therefore, in all cases except where $T_1=T_2=-T_3<0$,  $x_1x_2x_3(x_1+x_2+x_3)\neq 0$ so solving \eqref{MAIN2} is equivalent to solving \eqref{THEQ}.

\begin{lemma}\label{SL2+--}
 If $\lambda_1=\lambda_2=2$, $\lambda_3=-2$, $T_1>0$ and $T_2,T_3<0$, then \eqref{THEQ} has a solution such that $v_1,v_2,v_3>0$ if and only if $-T_1<T_3$. If 
 $T_2>0$ and $T_1,T_3<0$, then~\eqref{THEQ} has a solution 
 with $v_1,v_2,v_3>0$ if and only if $-T_2<T_3$. The solution is unique in both cases. 
\end{lemma}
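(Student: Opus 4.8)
The plan is to exploit the correspondence between solutions of \eqref{THEQ} and solutions of \eqref{MAIN2} already set up above, exactly as in the $SO(3)$ analysis. Since $T_1>0$ and $T_2,T_3<0$, none of the $T_i$ vanishes and $T_1=T_2=-T_3$ is impossible, so by the discussion preceding this lemma the correspondence is one-to-one; it therefore suffices to count solutions $p$ of $f(p)=1$ that yield positive $v_1,v_2,v_3$. First I would pin down the sign pattern: positivity of $v_1,v_2,v_3$ together with the signs of $T_1,T_2,T_3$ in \eqref{THEQ} forces $x_1<0<x_2,x_3$ (here the relevant identities are $2v_1=x_2+x_3$, $2v_2=x_1+x_3$, $2v_3=-(x_1+x_2)$, the specialization of the $x_i$ to $\lambda_1=\lambda_2=2$, $\lambda_3=-2$). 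Feeding these signs into \eqref{x_1x_2x_3} shows that the cube root $q$ must be negative and that $p$ must lie in the open interval $(-T_1,T_3)$. This interval is nonempty precisely when $-T_1<T_3$, which already yields the necessity of the stated condition.

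For existence I would clear denominators in \eqref{MAIN2}. Writing $P(p)=(p+T_1)(p+T_2)(p-T_3)$, one has $f(p)-1=h(p)/P(p)$ with
\begin{equation*}
 h(p)=2p^3+(T_1+T_2-T_3)p^2+T_1T_2T_3,
\end{equation*}
and $P(p)>0$ throughout $(-T_1,T_3)$, so on this interval $f(p)=1$ is equivalent to $h(p)=0$. A direct factorization gives $h(-T_1)=-T_1(T_1-T_2)(T_1+T_3)$ and $h(T_3)=T_3(T_1+T_3)(T_2+T_3)$; under the hypotheses (using $T_1+T_3>0$) the first is negative and the second positive, so the intermediate value theorem produces a root $p_0\in(-T_1,T_3)$. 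Taking $q$ to be the real cube root of $p_0P(p_0)<0$ and defining $x_1,x_2,x_3$ by \eqref{x_1x_2x_3} returns a genuine solution of \eqref{THEQ}: the signs come out as required, and the remaining constraints $x_1+x_3>0$, $x_1+x_2<0$ follow automatically from the three equations together with the established sign pattern.

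The main obstacle is uniqueness, namely ruling out a second or third root of $h$ inside $(-T_1,T_3)$. Here I would use that the whole interval lies in $(-\infty,0)$. The derivative $h'(p)=2p\bigl(3p+(T_1+T_2-T_3)\bigr)$ vanishes only at $p=0$ and $p=-\tfrac{1}{3}(T_1+T_2-T_3)$, and $p=0$ is never in the interval. If the second critical point is also outside $(-T_1,T_3)$, then $h$ is monotone there, and since $h(-T_1)<0<h(T_3)$ it must be increasing, giving exactly one root. If instead the second critical point lies inside, it is necessarily negative and hence the smaller of the two critical points, i.e. the local maximum of $h$ (the larger critical point $p=0$ being the local minimum); then $h$ increases from the negative value $h(-T_1)$ up to this maximum and decreases to the positive value $h(T_3)$, crossing zero once on the way up and remaining positive on the way down. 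Either way there is a unique admissible $p$, hence a unique solution of \eqref{THEQ}.

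Finally, the case $T_2>0$, $T_1,T_3<0$ is obtained from the one just treated by interchanging the indices $1$ and $2$: since $\lambda_1=\lambda_2=2$, this relabeling preserves \eqref{NICE} and \eqref{MAIN2}, and $h$ is symmetric in $T_1,T_2$, so the criterion becomes $-T_2<T_3$ and uniqueness is inherited. This completes the plan.
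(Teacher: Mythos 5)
Your proposal is correct and follows essentially the same route as the paper: reduce to \eqref{MAIN2}, use the sign constraints to confine $p$ to the interval between $-T_1$ and $T_3$, establish existence from the behaviour at the endpoints, and prove uniqueness by analysing the critical points of the cubic $2p^3+(T_1+T_2-T_3)p^2+T_1T_2T_3$, with the second case handled by the $1\leftrightarrow 2$ symmetry. The only difference is presentational — you work with the cleared-denominator polynomial and its endpoint values where the paper uses the asymptotes of $f$, and you spell out the critical-point argument the paper leaves as ``straightforward.''
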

\begin{proof}
We construct a solution of \eqref{THEQ} by finding 
a solution of \eqref{MAIN2} and checking that $v_1,v_2,v_3$ associated to this solution are all positive.

First let us suppose that $T_1>0$ and $T_2,T_3<0$. In this case we note that $f(p)>1$ 
if $p<\min\{-T_1,T_3,-T_2\}$, $f(p)<1$ if $\max\{-T_1,T_3\}<p<0$, and 
$f(p)>1$ if $p>-T_2$. Thus the only possible solutions of 
\eqref{MAIN2} are $p$ between $T_3$ and $-T_1$, 
or between $0$ and $-T_2$. From \eqref{x_1x_2x_3} and \eqref{THEQc}, we see that the solution needs to be between 
$T_3$ and $-T_1$ to have $v_1,v_2,v_3$ all positive, and we also require $-T_1<T_3$. 
If indeed we do have $-T_1<T_3$, then as 
$p$ approaches $-T_1$ from above, $f(p)$ approaches $-\infty$, and as $p$ approaches $T_3$ from below, $f(p)$ approaches $+\infty$, 
so we have a solution of \eqref{MAIN2}. 
 
We claim that this solution is unique. To see this, similarly to what we did 
for $SO(3)$, 
convert $f(p)=1$ into the cubic equation
\begin{equation}
 \bar{f}(p)=2p^3+(T_1+T_2-T_3)p^2+T_1T_2T_3=0.
\end{equation}
It is straightforward to show that the solution $p\in(-T_1,T_3)$ is unique by analysing the behaviour of the critical points of 
$\bar{f}$.

If $T_2>0$ and $T_1,T_3<0$, similar reasoning reveals that for positive $v_1,v_2,v_3$, we require $-T_2<T_3$, and that there is indeed a unique solution of \eqref{MAIN2} between $-T_2$ and $T_3$. 
\end{proof}

Let us now consider the case where $T_1,T_2<0$ and $T_3>0$.
\begin{lemma}\label{SL2--+}
  Suppose $\lambda_1=\lambda_2=2$, $\lambda_3=-2$, $T_1,T_2<0$, $T_3>0$ and the two equations 
  $T_3=-T_1$ and $T_3=-T_2$ do not hold simultaneously. Then
 \eqref{THEQ} has a solution satisfying $v_1,v_2,v_3>0$ if and only if $\max\{-T_1,-T_2\}<T_3$ or $T_3<\min\{-T_1,-T_2\}$. This solution is unique.
\end{lemma}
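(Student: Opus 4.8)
The plan is to mirror the proof of Lemma \ref{SL2+--}, working through the correspondence between solutions of \eqref{THEQ} and roots of \eqref{MAIN2}. The first step is to record that, because $T_1,T_2<0$ and $T_3>0$, all three poles $-T_1,-T_2,T_3$ of $f$ are strictly positive. Writing $a_1=-T_1$, $a_2=-T_2$, $a_3=T_3$, each summand of $f$ is $\tfrac{p}{p-a_i}$ with $a_i>0$, so $\tfrac{d}{dp}\tfrac{p}{p-a_i}=\tfrac{-a_i}{(p-a_i)^2}<0$, and hence $f$ is strictly decreasing on every interval between consecutive poles. Since $f(0)=0$, $f\to3$ at $\pm\infty$, and $f\to\pm\infty$ across each pole, a monotonicity count shows that $f(p)=1$ has exactly one root left of the smallest pole (lying in $(-\infty,0)$ because $f(0)=0<1$), exactly one root in each bounded interval between consecutive poles, and no root beyond the largest pole (there $f>3$). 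This pins down the location of every candidate root relative to the poles.

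Next I would translate positivity of $(v_1,v_2,v_3)$ into a sign condition on $(x_1,x_2,x_3)$. From \eqref{THEQ} one has $v_i=T_i/(2x_jx_k)$, so with $T_1,T_2<0<T_3$ the metric is positive-definite precisely when $x_2x_3<0$, $x_1x_3<0$ and $x_1x_2>0$; together with $x_3=v_1+v_2+v_3>0$ this forces the sign pattern $(x_1,x_2,x_3)=(-,-,+)$. It then remains to read off the signs of the $x_i$ at each root from \eqref{x_1x_2x_3}: the sign of $x_i$ is that of $q/(p-a_i)$, and the sign of $q$ equals that of $q^3=p\,(p-a_1)(p-a_2)(p-a_3)$. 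Carrying this out shows that the negative root yields $(-,-,-)$ and is discarded, while each bounded-gap root yields exactly one positive $x_i$, namely the one whose index corresponds to the pole bounding that gap on the side where $f\to+\infty$.

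The pattern $(-,-,+)$ therefore occurs at a (necessarily unique) root exactly when the pole $a_3=T_3$ is the extreme one: if $T_3>\max\{-T_1,-T_2\}$, the admissible root sits in the top gap $(\max\{-T_1,-T_2\},T_3)$, and if $T_3<\min\{-T_1,-T_2\}$, it sits in the gap $(T_3,\min\{-T_1,-T_2\})$; in the remaining strict ordering, where $T_3$ is the middle pole, both bounded-gap roots have $x_3<0$ and no admissible solution exists. This simultaneously gives the stated existence criterion and uniqueness, since at most one root can realise $(-,-,+)$. Finally, the borderline coincidences $T_3=-T_1$ or $T_3=-T_2$ (but not both, which is excluded) collapse two poles, forcing $x_1=x_3$ or $x_2=x_3$ and hence ruling out $(-,-,+)$; this confirms nonexistence at the boundary and matches the strict inequalities in the statement. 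I expect the main obstacle to be the careful sign bookkeeping of $q$ and the $x_i$ across the possible orderings of the poles together with these degenerate coincidences; the analytic content (monotonicity and root counting for $f$) is routine once the poles are known to be positive.
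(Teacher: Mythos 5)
Your overall strategy is the paper's: reduce positivity of $(v_1,v_2,v_3)$ to the sign pattern $(x_1,x_2,x_3)=(-,-,+)$, locate the roots of \eqref{MAIN2} using the fact that $f$ is strictly decreasing between its (all positive) poles, and decide at which root the required sign pattern occurs. Your final existence criterion and the uniqueness claim are correct, and your handling of the negative root, the unbounded intervals, and the degenerate coincidence $T_3=-T_1$ or $T_3=-T_2$ is sound (the paper is in fact terser on that last point).

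There is, however, a concrete sign error in the step that does the real work. You assert that at the root in a bounded gap the unique positive $x_i$ is the one attached to the pole at which $f\to+\infty$, i.e.\ the left endpoint of that gap. This is true for the bottom gap but false for the top gap: writing $a_{(1)}<a_{(2)}<a_{(3)}$ for the ordered poles, a root $p\in(a_{(2)},a_{(3)})$ gives $q^3=p(p-a_{(1)})(p-a_{(2)})(p-a_{(3)})<0$, hence $q<0$, and by \eqref{x_1x_2x_3} the unique positive $x_i$ is $q/(p-a_{(3)})$, attached to the \emph{right} endpoint $a_{(3)}$, where $f\to-\infty$. (Test with $T_1=-1$, $T_2=-2$, $T_3=3$: the admissible root is near $p\approx 2.53$ in $(2,3)$, and there $x_3>0$, not $x_2$.) The correct bookkeeping is that the positive $x_i$ always corresponds to the extreme pole adjacent to the gap --- the smallest pole for the bottom gap, the largest for the top gap --- so the middle pole never receives a positive $x_i$. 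Your stated rule would instead yield existence precisely when $T_3$ is the smallest or the \emph{middle} pole, which contradicts the (correct) conclusion you then draw. Since the conclusion matches the correct computation rather than your rule, the argument as written is internally inconsistent at exactly the point where the existence criterion is derived; the fix is simply to recompute the sign of $q$ on the top gap.
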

\begin{proof}
From \eqref{THEQc} and \eqref{x_1x_2x_3} we see that our solution $p$ of 
\eqref{MAIN2} corresponds to $v_1,v_2,v_3$ all being positive if and only if $\max\{-T_1,-T_2\}<p<T_3$ or $T_3<p<\min\{-T_1,-T_2\}$.
If $\max\{-T_1,-T_2\}<T_3$ then we have a unique solution because $f(p)$ is monotonically decreasing on the interval $(\max\{-T_1,-T_2\},T_3)$, with 
$f(p)$ approaching $\infty$ and $-\infty$ as $p$ approaches $\max\{-T_1,-T_2\}$ and $T_3$, respectively. 
Similarly, if $T_3<\min\{-T_1,-T_2\}$ a solution exists and is unique.
\end{proof}
\begin{lemma}\label{SL2--+=}
  Suppose $\lambda_1=\lambda_2=2$, $\lambda_3=-2$, $T_1,T_2<0$, $T_3>0$ and $T_1=-T_3$ or $T_2=-T_3$. Then the system of equations \eqref{THEQc} has a solution $(v_1,v_2,v_3,c)$ where 
  $v_1,v_2,v_3>0$ if and only if $T_1=T_2=-T_3$.  
  There are infinitely many solutions $(v_1,v_2,v_3,c)$, but $c$ is the same for all solutions. 
\end{lemma}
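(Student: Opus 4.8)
The plan is to treat this as the boundary case of Lemma \ref{SL2--+}, where one of the strict inequalities $\max\{-T_1,-T_2\}<T_3$ or $T_3<\min\{-T_1,-T_2\}$ degenerates into an equality. The organizing principle is the dichotomy recorded in the discussion preceding Lemma \ref{SL2+--}: a solution of \eqref{THEQ} corresponds to a solution of \eqref{MAIN2} exactly when $x_1x_2x_3(x_1+x_2+x_3)\neq0$, so I would split the analysis according to whether $x_1+x_2+x_3$ vanishes. First I would record the sign information. With $\lambda_1=\lambda_2=2$ and $\lambda_3=-2$ we have $x_3=v_1+v_2+v_3>0$, and reading off \eqref{THEQc} with $T_1,T_2<0<T_3$ forces $x_1x_2>0$ from the $i=3$ equation and then $x_2<0$ from the $i=1$ equation, so every solution has $x_1,x_2<0<x_3$. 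In particular $x_1x_2x_3\neq0$, whence the correspondence with \eqref{MAIN2} can fail only when $x_1+x_2+x_3=0$.

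Next I would dispose of the degenerate branch $x_1+x_2+x_3=0$, which by $x_1+x_2+x_3=v_1+v_2-v_3$ means $v_3=v_1+v_2$. Substituting gives $x_1=-2v_1$, $x_2=-2v_2$, $x_3=2v_3$, and feeding these into the three equations of \eqref{THEQc} collapses each left-hand side to a constant, yielding $cT_1=cT_2=-8$ and $cT_3=8$. This branch is therefore solvable precisely when $T_1=T_2=-T_3$, in which case $c=8/T_3$ is forced; conversely, every triple $(v_1,v_2,v_1+v_2)$ with $v_1,v_2>0$ together with $c=8/T_3$ is a genuine solution. These form a one-parameter family once we quotient by scaling, all sharing the single value $c=8/T_3$.

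Finally I would rule out the non-degenerate branch under the hypothesis $T_1=-T_3$ or $T_2=-T_3$. When $x_1+x_2+x_3\neq0$ the correspondence applies, so any solution arises from a root $p$ of \eqref{MAIN2} lying in the admissible set $(\max\{-T_1,-T_2\},T_3)\cup(T_3,\min\{-T_1,-T_2\})$ identified in the proof of Lemma \ref{SL2--+}. Since both indices carry $\lambda=2$ and this set is symmetric in $T_1,T_2$, I may assume $-T_1=T_3$; then the left endpoint $\max\{-T_1,-T_2\}$ is $\geq T_3$ while the right endpoint $\min\{-T_1,-T_2\}$ is $\leq T_3$, so both intervals are empty and no non-degenerate solution exists. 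Combining the two branches shows that \eqref{THEQc} is solvable with $v_1,v_2,v_3>0$ if and only if $T_1=T_2=-T_3$, that every solution is of the degenerate type, and hence that $c=8/T_3$ for all of them.

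I expect the delicate step to be this last one: verifying that the admissible $p$-intervals genuinely collapse to the empty set in every sub-configuration of the boundary hypothesis, both when exactly one of $T_1=-T_3$, $T_2=-T_3$ holds and when both do, since this is precisely where the strict inequalities of Lemma \ref{SL2--+} fail. One must be careful that no solution slips through at the coincidence $p=T_3$, where the formulas \eqref{x_1x_2x_3} are singular, which is exactly why the statement is phrased in terms of \eqref{THEQc} rather than \eqref{MAIN2} and why the degenerate branch $x_1+x_2+x_3=0$ must be analysed directly.
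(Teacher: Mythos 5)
Your proof is correct and uses essentially the same machinery as the paper: the correspondence between \eqref{THEQ} and \eqref{MAIN2}, the admissible $p$-intervals from the proof of Lemma \ref{SL2--+}, and a direct computation for the degenerate family $v_3=v_1+v_2$. The only real difference is organizational: the paper splits on whether one or both of the equalities $T_3=-T_1$, $T_3=-T_2$ hold (citing Lemma \ref{SL2--+} outright in the first case and reading $v_1+v_2=v_3$ and $c=\frac{8}{T_3}$ directly off \eqref{THEQc} in the second), whereas you split on whether $x_1+x_2+x_3$ vanishes; both routes are sound.
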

\begin{proof}
If exactly one of the two equations $T_3=-T_1$ or $T_3=-T_2$ holds then Lemma 
  \ref{SL2--+} implies that there is no solution of \eqref{THEQ}, 
  hence there is no solution of \eqref{THEQc}. We can therefore assume that $T_1=T_2=-T_3<0$. In this case, \eqref{THEQc} implies that 
  $v_1+v_2=v_3$, and $c=\frac{8}{T_3}$, which are sufficient constraints for a solution. 
\end{proof}

\vspace{0.3cm}
\noindent
$\textbf{Case}$ $\textbf{Two}$- at least one of $T_1,T_2,T_3$ is $0$. 

In this case, we see from \eqref{THEQc} that in fact two of $T_1,T_2,T_3$ must be $0$ and the 
third, which cannot be $T_3$, is negative. 
\begin{lemma}\label{SL20}
Suppose $\lambda_1=\lambda_2=2$, $\lambda_3=-2$ and at least one of $T_1,T_2,T_3$ is $0$. Then the system of equations \eqref{THEQc}
has a solution $(v_1,v_2,v_3,c)$ such that 
$v_1,v_2,v_3>0$ if and only if $T_1<0$ and $T_2=T_3=0$, or $T_2<0$ and $T_1=T_3=0$. There are infinitely many solutions $(v_1,v_2,v_3,c)$, 
but $c$ is the same for all solutions and is given by $c=-\frac{T_i}{8}$ where $i$ is the unique index such that $T_i<0$. 
\end{lemma}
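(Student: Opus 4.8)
The plan is to work directly with the system \eqref{THEQc} rather than passing to \eqref{MAIN2}, using the explicit form of the auxiliary quantities for $SL(2)$: since $\lambda_1=\lambda_2=2$ and $\lambda_3=-2$, we have $x_1=v_2-v_1-v_3$, $x_2=v_1-v_2-v_3$ and $x_3=v_1+v_2+v_3$, and the crucial observation is that $x_3>0$ whenever $v_1,v_2,v_3>0$. First I would determine which of $T_1,T_2,T_3$ are forced to vanish. If $T_1=0$, the first equation of \eqref{THEQc} reads $2x_2x_3/(v_2v_3)=0$, and since $x_3>0$ this forces $x_2=0$; but then the third equation gives $cT_3=2x_1x_2/(v_1v_2)=0$, so $T_3=0$ as well. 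By the same reasoning $T_2=0$ forces $T_3=0$, while $T_3=0$ forces $x_1=0$ or $x_2=0$ and hence $T_2=0$ or $T_1=0$, respectively. Chasing these implications shows that any solvable configuration with some $T_i=0$ must in fact have exactly two vanishing entries, and that the surviving nonzero entry cannot be $T_3$: the only admissible patterns are $T_2=T_3=0$ with $T_1\neq0$, or $T_1=T_3=0$ with $T_2\neq0$.

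Next I would pin down the sign of the surviving entry. In the case $T_2=T_3=0$, the vanishing of $T_2$ forces $x_1=0$, that is $v_2=v_1+v_3$, whence $x_2=v_1-v_2-v_3=-2v_3<0$ and $x_3=2v_2>0$. The first equation of \eqref{THEQc} then gives $cT_1=2x_2x_3/(v_2v_3)$, a negative multiple of the positive constant $c$, which forces $T_1<0$. The case $T_1=T_3=0$ is symmetric and forces $T_2<0$. This establishes the "only if" direction together with the stated sign restrictions, and shows that no solution can be supported on $T_3$.

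For the converse I would simply exhibit the solutions. Assuming $T_2=T_3=0$ and $T_1<0$, I set $v_2=v_1+v_3$ with $v_1,v_3>0$ arbitrary; then $x_1=0$ makes the second and third equations of \eqref{THEQc} hold automatically, and substituting into the first equation a direct computation gives $2x_2x_3/(v_2v_3)=-8$, so $c=-8/T_1>0$, consistent with the analogous Lemmas \ref{SO3+00} and \ref{SL2--+=}. Since the ratio $v_1/v_3$ remains free even after normalising by scaling, this yields a one-parameter family of solutions $(v_1,v_2,v_3,c)$ up to scaling, hence infinitely many, and the value of $c$ is manifestly independent of that ratio; the symmetric construction handles $T_1=T_3=0$, $T_2<0$ with $c=-8/T_2$.

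I expect the only genuine obstacle to be the bookkeeping in the first step: one must follow every consequence of a single vanishing $T_i$ to its conclusion, since a careless analysis might overlook that $T_3=0$ on its own is impossible, or might fail to exclude putative solutions with the nonzero entry placed at $T_3$. Once the correct configuration of zeros and signs is isolated, the existence, the infinitude of solutions, and the constancy of $c$ all follow from a single direct substitution.
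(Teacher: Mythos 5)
Your proof is correct and follows essentially the same route the paper intends: the paper's own proof is a one-line reference to the analogous Lemma \ref{SO3+00}, which likewise works directly with \eqref{THEQc}, uses $x_3=v_1+v_2+v_3>0$ to force $x_1=0$ (resp.\ $x_2=0$) from the vanishing entries, rules out the remaining zero-patterns, and reads off the one-parameter family together with the constant $c$. Note that the value $c=-8/T_i$ you derive is the correct one, consistent with Lemmas \ref{SO3+00}, \ref{SL2--+=} and \ref{E11S}; the expression $c=-\frac{T_i}{8}$ in the statement of Lemma \ref{SL20} is evidently a typo.
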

\begin{proof}
The proof is straightforward and is very similar to the proof of Lemma \ref{SO3+00}. 
\end{proof}

\subsection{Uniqueness}
We have now shown exactly when equations \eqref{THEQc} and \eqref{THEQ} have solutions for positive $v_1,v_2,v_3$. 
By Lemma \ref{BS}, this settles the existence component of Theorem \ref{SL2T}. 
Now to examine uniqueness, we must consider all bases $X_1,X_2,X_3$ which satisfy 
\eqref{NICE} for $\lambda_1=\lambda_2=-\lambda_3=2$ 
in which $T$ remains diagonal. To use Lemma~\ref{UNIQUE} we need to show that the $g$ solution of $Ric(g)=cT$ which is diagonal in the $V_1,V_2,V_3$ basis is also diagonal in the $X_1,X_2,X_3$ basis.
First we will consider when the change of basis satisfies \eqref{NICE}. 
\begin{lemma}\label{SL2C}
Let $V_1,V_2,V_3$ be a basis of the 
Lie algebra of $SL(2)$ which satisfies~\eqref{NICE} for $\lambda_1=\lambda_2=2$ and $\lambda_3=-2$.
If $X_1,X_2,X_3$ is another basis 
that satisfies~\eqref{NICE} then 
the change of basis matrix from $V_1,V_2,V_3$ to $X_1,X_2,X_3$ has the form
\begin{equation}\label{SL2CE}
 bda=
 \begin{pmatrix}
  \cos(\theta)&-\sin(\theta)&0\\
  \sin(\theta)&\cos(\theta)&0\\
  0&0&1
 \end{pmatrix}
 \begin{pmatrix}
  1&0&0\\
  0&\cosh(\phi)&-\sinh(\phi)\\
  0&-\sinh(\phi)&\cosh(\phi)
 \end{pmatrix}
 \begin{pmatrix}
  0&a_{12}&a_{13}\\
 \pm 1& 0&0\\
 0& \mp a_{13}&\mp a_{12}
\end{pmatrix}
\end{equation}
for some real parameters $\theta,\phi,a_{12}$ and $a_{13}$ such that $a_{12}^2-a_{13}^2=1$. 
\end{lemma}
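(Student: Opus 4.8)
The plan is to reduce the claim to a structural fact about the automorphism group of the Lie algebra and then produce the factorization \eqref{SL2CE} by an explicit change-of-basis argument, in close parallel with Lemma \ref{SO3Change}. First I would observe that a change of basis preserves \eqref{NICE} precisely when it is a Lie algebra automorphism: writing $X_i=\sum_j M_{ji}V_j$ and letting $\Phi$ be the linear map $V_i\mapsto X_i$ (so $\Phi$ has matrix $M$ in the $V$-basis), the structure constants in the $X$-basis equal those in the $V$-basis if and only if $\Phi[u,w]=[\Phi u,\Phi w]$ for all $u,w$, by bilinearity. The decisive point, exactly as for $SO(3)$, is that with $\lambda_1=\lambda_2=2$, $\lambda_3=-2$ the bracket is twice a \emph{Lorentzian} cross product: setting $\eta=\mathrm{diag}(1,1,-1)$ and $u\times_\eta w=\eta^{-1}(u\times w)$, where $\times$ is the Euclidean cross product, one checks $[u,w]=2\,(u\times_\eta w)$.

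Using the identity $Mu\times Mw=(\det M)M^{-\mathrm{T}}(u\times w)$, the automorphism condition $\Phi(u\times_\eta w)=\Phi u\times_\eta\Phi w$ becomes $M\eta^{-1}M^{\mathrm{T}}=(\det M)\eta^{-1}$; taking determinants forces $(\det M)^2=(\det M)^3$, hence $\det M=1$, and therefore $M\in SO(2,1)$, the determinant-one subgroup preserving $\eta$. This is the $SL(2)$ analogue of the statement in Lemma \ref{SO3Change} that the transition matrix must lie in $SO(3)$. A direct computation confirms that the displayed matrices $b$, $d$ and $a$ all satisfy $N^{\mathrm{T}}\eta N=\eta$ with $\det N=1$, so $b,d,a\in SO(2,1)$ and the product $bda$ is an admissible change of basis.

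With this reduction in hand, I would obtain the factorization by peeling off the factors $b$ and $d$. Both lie in the identity component of $SO(2,1)$, and I would compute that $bd\,e_2=(-\cosh\phi\sin\theta,\;\cosh\phi\cos\theta,\;-\sinh\phi)^{\mathrm{T}}$ ranges over \emph{every} spacelike $\eta$-unit vector as $\theta,\phi$ vary: the third component $-\sinh\phi$ is arbitrary, while the first two have Euclidean norm $\cosh\phi$ and arbitrary direction. Since $M\in SO(2,1)$, the first column $MV_1$ is a spacelike $\eta$-unit vector, so I can choose $\theta,\phi$ with $bd\,e_2=MV_1$, and then $a:=(bd)^{-1}M\in SO(2,1)$ satisfies $ae_1=e_2$, matching the first column of the prescribed form.

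It remains to show that any $a\in SO(2,1)$ with $ae_1=e_2$ has exactly the displayed shape. Orthogonality of the second and third columns to the first column $e_2$ forces both into the span of $V_1,V_3$; writing the spacelike unit second column as $(a_{12},0,-a_{13})^{\mathrm{T}}$ yields the constraint $a_{12}^2-a_{13}^2=1$, and the timelike unit third column, being $\eta$-orthogonal to it, is determined up to an overall sign as $\pm(a_{13},0,-a_{12})^{\mathrm{T}}$, with the condition $\det a=1$ selecting the admissible sign and correlating it to the first-column sign, producing the patterns $\pm,\mp$ in \eqref{SL2CE}. I expect the main obstacle to be precisely this last bookkeeping — tracking the discrete sign choices and confirming that the two components of $SO(2,1)$ are all captured — rather than the conceptual reduction, which follows the $SO(3)$ template directly.
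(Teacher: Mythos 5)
Your proposal is correct, and its skeleton coincides with the paper's: both arguments exploit the fact that $b$ and $d$ are themselves admissible changes of basis for every $\theta,\phi$, use them to normalise the residual factor $a=(bd)^{-1}M$, and then pin down the shape of $a$. Where you differ is in how the two halves are carried out. The paper merely normalises so that $a_{11}=0$ (plus unspecified further constraints from refining $\phi$) and then checks the bracket relations on $a$ directly, leaving the computation as ``straightforward''; it only invokes the cross-product trick in the $SO(3)$ setting (Lemma \ref{SO3Change}). You instead extend that trick to the Lorentzian signature: identifying the bracket with $2\eta^{-1}(u\times w)$ for $\eta=\mathrm{diag}(1,1,-1)$ and using $Mu\times Mw=(\det M)M^{-\mathrm{T}}(u\times w)$ gives $M\eta M^{\mathrm{T}}=(\det M)\eta$, hence $\det M=1$ and $M\in SO(2,1)$. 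This buys you two things the paper does not make explicit: a stronger normalisation (the whole first column of $a$ can be set to $e_2$, since $bd\,e_2$ sweeps out the full one-sheeted hyperboloid of spacelike $\eta$-unit vectors), and a determination of the remaining columns by $\eta$-orthogonality and the condition $\det a=1$ rather than by re-expanding the structure constants. I checked the computations you flag as the likely sticking points --- the identity $[u,w]=2(u\times_\eta w)$ for $\lambda_1=\lambda_2=2$, $\lambda_3=-2$, the formula $bd\,e_2=(-\cosh\phi\sin\theta,\cosh\phi\cos\theta,-\sinh\phi)^{\mathrm{T}}$, and the sign bookkeeping forced by $\det a=1$ --- and they all go through; in particular both components of $SO(2,1)$ are reached because $a_{12}$ may take either sign subject to $a_{12}^2-a_{13}^2=1$. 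Your version is more detailed and arguably more transparent than the proof in the paper.
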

\begin{proof}
Suppose that we have a basis $V_1,V_2,V_3$ that satisfies \eqref{NICE}. Suppose also that the basis $X_i=\sum_{j=1}^{3}A_{ji}V_j$ satisfies \eqref{NICE}. 

Changes of basis given by the matrices $b^{-1}$ and $d^{-1}$ preserve the Lie bracket 
relations \eqref{NICE} for any $\theta$ and $\phi$. Therefore, it is clear that 
the change of basis corresponding to $A$ preserves \eqref{NICE} if and only if the change
$a=d^{-1}b^{-1}A$ also preserves \eqref{NICE}. By choosing $\theta$ and $\phi$ appropriately, we can assume that 
$a_{11}=0$. By refining our choice of $\phi$, we can also assume other constraints on $a$ so that finding when $a$ preserves \eqref{NICE} becomes straightforward. 
\end{proof}

Suppose that $T$ is diagonal in a basis $V_1,V_2,V_3$. We know from Lemma \ref{SL2C} 
that any change of basis matrix which preserves
the Lie bracket relations has the form 
$bda$, where $b$, $d$ and $a$ are given by \eqref{SL2CE}.

To apply Lemma \ref{UNIQUE}, we need to find out when $T$ remains diagonal under such a change, 
and how our metric solutions behave under such a change.
The following lemma provides initial constraints on $T$ and our change 
of basis. 
\begin{lemma}\label{SL2S}
 Suppose a left-invariant $(0,2)$ tensor field $T$ is diagonal in a basis $V_1,V_2,V_3$. Let 
 $Y_1,Y_2,Y_3$ be the basis related to the basis $V_1,V_2,V_3$ by the matrix $bd$ and let 
 $X_1,X_2,X_3$ be the basis related to $Y_1,Y_2,Y_3$ by the matrix $a$. If $T$ is diagonal in the basis $X_1,X_2,X_3$ 
 then $T$ is diagonal in the basis $Y_1,Y_2,Y_3$ and at least one of the following conditions holds: 
 
 (i) $T_1=T_2$, where $T_i$ is given by \eqref{TV}
 
 (ii) $\cos(\theta)=0$
 
 (iii) $\sin(\theta)=0$.
\end{lemma}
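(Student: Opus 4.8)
The plan is to restate the hypothesis as a condition on congruent matrices and read the conclusion off from it. Write $\hat T=\mathrm{diag}(T_1,T_2,T_3)$ for the matrix of $T$ in the $V_1,V_2,V_3$ basis. Then the matrix of $T$ in the $Y_1,Y_2,Y_3$ basis is $\hat T_Y=(bd)^{T}\hat T\,(bd)=d^{T}b^{T}\hat T\,b\,d$, and the matrix of $T$ in the $X_1,X_2,X_3$ basis is $a^{T}\hat T_Y\,a$. The hypothesis is thus that $a^{T}\hat T_Y a$ is diagonal, and I must deduce that $\hat T_Y$ is already diagonal, together with the stated trichotomy.

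First I would compute $\hat T_Y$ explicitly. Applying the rotation $b$ in the $(1,2)$-plane and then the hyperbolic rotation $d$ in the $(2,3)$-plane to $\hat T$ is a short computation, and the three off-diagonal entries come out as
\begin{align*}
(\hat T_Y)_{12}&=(T_2-T_1)\cos\theta\sin\theta\cosh\phi, \qquad (\hat T_Y)_{13}=-(T_2-T_1)\cos\theta\sin\theta\sinh\phi,\\
(\hat T_Y)_{23}&=-(T_1\sin^2\theta+T_2\cos^2\theta+T_3)\cosh\phi\sinh\phi.
\end{align*}
The observation that drives everything is that $(\hat T_Y)_{12}$ and $(\hat T_Y)_{13}$ carry the common factor $(T_2-T_1)\cos\theta\sin\theta$.

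Next I would impose that $a^{T}\hat T_Y a$ is diagonal. Since the first column of $a$ is $\pm e_2$ while its remaining two columns lie in $\mathrm{span}\{e_1,e_3\}$ with coefficients built from $a_{12}$ and $a_{13}$, the $(1,2)$- and $(1,3)$-entries of $a^{T}\hat T_Y a$ are linear combinations of $(\hat T_Y)_{12}$ and $(\hat T_Y)_{23}$ whose coefficient matrix has determinant $\pm(a_{12}^2-a_{13}^2)=\pm1\neq0$. Hence these two entries of $a^{T}\hat T_Y a$ vanish if and only if $(\hat T_Y)_{12}=(\hat T_Y)_{23}=0$. Now $(\hat T_Y)_{12}=0$ with $\cosh\phi>0$ forces $(T_2-T_1)\cos\theta\sin\theta=0$, and this single relation does two jobs at once: it makes $(\hat T_Y)_{13}$ vanish as well, so that $\hat T_Y$ is diagonal and $T$ is diagonal in the $Y$ basis; and it is exactly the assertion that $T_1=T_2$, or $\cos\theta=0$, or $\sin\theta=0$. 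The alternative $\mp$ sign in $a$ only toggles signs in the linear system while leaving the determinant equal to $\pm1$, so both sign choices are covered simultaneously.

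The main difficulty I anticipate is purely organizational: getting the order of the congruences right (it is $d^{T}b^{T}\hat T\,b\,d$, not $b^{T}d^{T}\hat T\,d\,b$) and keeping the small determinant computation clean. The conceptual heart is recognizing that $(\hat T_Y)_{12}$ and $(\hat T_Y)_{13}$ share the same factor, so that the vanishing of the first simultaneously yields the diagonality of $\hat T_Y$ and the trichotomy with no extra work; in particular, the third off-diagonal condition arising from the $(2,3)$-entry of $a^{T}\hat T_Y a$ is never needed.
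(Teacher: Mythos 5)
Your proposal is correct and follows essentially the same route as the paper: both arguments first force $T(Y_1,Y_2)=T(Y_2,Y_3)=0$ from the special block structure of $a$ together with $a_{12}^2-a_{13}^2=1$ (you phrase this as a nonvanishing $2\times2$ determinant, the paper as a substitution leading to the contradiction $a_{12}^2=a_{13}^2$), and then both read the trichotomy off the factor $(T_2-T_1)\cos\theta\sin\theta\cosh\phi$ in $T(Y_1,Y_2)$ and observe that the same factor kills $T(Y_1,Y_3)$. Your explicit formulas for the off-diagonal entries of $\hat T_Y$ agree with the paper's, so no gap.
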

\begin{proof}
Since the representation of $T$ becomes diagonal after the $a$ change, we find that 
\begin{align*}
T(X_1,X_2)&=T(\pm Y_2,a_{12}Y_1\mp a_{13}Y_3)=\pm a_{12}T(Y_1,Y_2)\mp a_{13}T(Y_2,Y_3)=0,\\
T(X_1,X_3)&=T(\pm Y_2,a_{13}Y_1\mp a_{12}Y_3)=\pm a_{13}T(Y_1,Y_2)\mp a_{12}T(Y_2,Y_3)=0,
\end{align*}
so 
\begin{align*}
 a_{12}T(Y_1,Y_2)= a_{13}T(Y_2,Y_3),\qquad
 a_{13}T(Y_1,Y_2)= a_{12}T(Y_2,Y_3).
\end{align*}
If either one of $T(Y_2,Y_3)$ or $T(Y_1,Y_2)$ were not $0$, we could multiply the first equation by $a_{13}$ or 
the second by $a_{12}$ respectively, and substitute one into the other. We would conclude that $a_{12}^2=a_{13}^2$ which conflicts with the 
constraint $1=-a_{13}^2+a_{12}^2$, so both $T(Y_1,Y_2)$ and $T(Y_2,Y_3)$ are $0$. It therefore remains to check that $T(Y_1,Y_3)=0$.

Now the $Y_1,Y_2,Y_3$ basis is related to the $V_1,V_2,V_3$ by the change of basis matrix
\begin{align*}
 bd=
 \begin{pmatrix}
  \cos(\theta)&-\sin(\theta)\cosh(\phi)&\sin(\theta)\sinh(\phi)\\
  \sin(\theta)&\cos(\theta)\cosh(\phi)&-\cos(\theta)\sinh(\phi)\\
  0&-\sinh(\phi)&\cosh(\phi)
 \end{pmatrix}
\end{align*}
so we find that 
\begin{align*}
 T(Y_1,Y_2)&=-\cos(\theta)\sin(\theta)\cosh(\phi)T_1+\sin(\theta)\cos(\theta)\cosh(\phi)T_2=0.
\end{align*}
Since $\cosh(\phi)\neq 0$, this equation implies that $T_1=T_2$, $\sin(\theta)=0$ or $\cos(\theta)=0$. In any case,
we find that $T(Y_1,Y_3)=\cos(\theta)\sin(\theta)\sinh(\phi)(T_1-T_2)=0$ so $T$ must in fact be diagonal in the 
$Y_1,Y_2,Y_3$ basis. 
\end{proof}

We use Lemma \ref{SL2S} to treat uniqueness by cases. 

\vspace{0.3cm}
\noindent
$\textbf{Case}$ $\textbf{One}$- $T_1\neq T_2$. 

This case is treated by Lemmas \ref{SL2Y} and \ref{SL2N}. 
For Lemma \ref{SL2Y} we suppose that none of
$T_1,T_2,T_3$ is $0$ and in Lemma \ref{SL2N} we suppose that at least one of $T_1,T_2,T_3$ is $0$. 
The proof of both lemmas is by use of Lemma \ref{SL2S}, direct evaluation and the solving of simple simultaneous equations. 
\begin{lemma}\label{SL2Y}
Suppose there is some left-invariant $T$ such that $Ric(g)=cT$ for some left-invariant $g$ and some constant $c>0$. Let $V_1,V_2,V_3$ 
be a basis satisfying~\eqref{NICE} in which $g$ and $T$ are diagonal. 
Suppose that $T_1\neq T_2$ and none of $T_1,T_2,T_3$ is $0$, where $T_i$ is given by \eqref{TV}. If $T$ is diagonal in some
other basis $X_1,X_2,X_3$ which preserves the Lie bracket relations \eqref{NICE},
then $X_i=\pm V_i$ after possibly interchanging $V_1$ and $V_2$.
\end{lemma}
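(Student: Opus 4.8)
The plan is to exploit the factorization of the change of basis provided by Lemma \ref{SL2C} together with the diagonality constraints extracted in Lemma \ref{SL2S}, and then to eliminate the two continuous parameters $\phi$ and $a_{13}$ by invoking the existence conditions recorded in Theorem \ref{SL2T}. Concretely, I would write the change of basis from $V_1,V_2,V_3$ to $X_1,X_2,X_3$ as $bda$ as in \eqref{SL2CE}, and let $Y_1,Y_2,Y_3$ be the intermediate basis obtained by applying $bd$. Since $T_1\neq T_2$ by hypothesis, Lemma \ref{SL2S} already rules out its option (i), so we must have $\sin(\theta)=0$ or $\cos(\theta)=0$; moreover, Lemma \ref{SL2S} guarantees that $T$ is diagonal in the $Y_1,Y_2,Y_3$ basis.

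First I would show that $\phi=0$. Because $T$ is diagonal in the $Y$ basis, the off-diagonal component $T(Y_2,Y_3)$ must vanish. Using the explicit matrix $bd$ displayed in the proof of Lemma \ref{SL2S}, a direct computation shows that in the case $\sin(\theta)=0$ this component equals $-\sinh(\phi)\cosh(\phi)(T_2+T_3)$, while in the case $\cos(\theta)=0$ it equals $-\sinh(\phi)\cosh(\phi)(T_1+T_3)$. The crucial point is that the hypotheses force $T$ to lie in one of the cases (i)--(iv) of Theorem \ref{SL2T}: the solvability of $Ric(g)=cT$ together with none of $T_1,T_2,T_3$ being zero restricts $T$ to cases (i)--(v), and $T_1\neq T_2$ excludes case (v). In each of the cases (i)--(iv) the defining inequalities give $T_1+T_3\neq0$ and $T_2+T_3\neq0$ (for instance, in case (i) the condition $-T_1<T_3$ is exactly $T_1+T_3>0$). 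Since $\cosh(\phi)\neq0$, this forces $\sinh(\phi)=0$, i.e.\ $\phi=0$, so that $Y_i=\pm V_i$ up to interchanging $V_1$ and $V_2$, the interchange occurring precisely when $\cos(\theta)=0$.

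It then remains to analyse the matrix $a$. With $\phi=0$ the $Y$-basis is a signed permutation of the $V$-basis, so the diagonal components $S_i:=T(Y_i,Y_i)$ are a corresponding permutation of $T_1,T_2,T_3$; in particular $S_1+S_3$ equals $T_1+T_3$ when $\sin(\theta)=0$ and $T_2+T_3$ when $\cos(\theta)=0$. Expanding the off-diagonal component $T(X_2,X_3)$ using the columns of $a$ in \eqref{SL2CE} and the diagonality of $T$ in the $Y$ basis yields $T(X_2,X_3)=a_{12}a_{13}(S_1+S_3)$. Since $S_1+S_3\neq0$ by the previous paragraph and $a_{12}\neq0$ (because $a_{12}^2-a_{13}^2=1$), we conclude $a_{13}=0$ and hence $a_{12}=\pm1$. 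Thus $a$ is itself a signed permutation interchanging the first two coordinates, and tracing $X_i=\sum_j a_{ji}Y_j$ back through $Y_i=\pm V_i$ shows that $X_i=\pm V_i$ after possibly interchanging $V_1$ and $V_2$, as desired.

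The main obstacle is not the bookkeeping of signs but the elimination of the two continuous degeneracies: without further input, both $T(Y_2,Y_3)=0$ and $T(X_2,X_3)=0$ could be satisfied by one-parameter families (through $\phi$ and through $a_{13}$) whenever $T_2+T_3$, $T_1+T_3$, or $S_1+S_3$ happens to vanish. The key observation that resolves this is that these sums are precisely the quantities controlled by the existence inequalities of Theorem \ref{SL2T}, so the assumption that $Ric(g)=cT$ is solvable is exactly what rules the degenerate branches out. I would take care to state this dependence on the existence conditions explicitly, since it is what makes the rigidity hold.
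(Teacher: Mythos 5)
Your proof is correct and follows exactly the route the paper intends (the paper only sketches it as ``use of Lemma~\ref{SL2S}, direct evaluation and the solving of simple simultaneous equations''): you apply Lemma~\ref{SL2S} to reduce to $\sin(\theta)=0$ or $\cos(\theta)=0$, compute $T(Y_2,Y_3)=-\sinh(\phi)\cosh(\phi)(T_i+T_3)$ and $T(X_2,X_3)=a_{12}a_{13}(S_1+S_3)$, and kill both degeneracies via the already-established existence conditions, which indeed force $T_1+T_3\neq 0$ and $T_2+T_3\neq 0$ in all of cases (i)--(iv). Your explicit observation that the existence inequalities are precisely what rule out the degenerate branches is a worthwhile clarification that the paper leaves implicit.
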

\begin{lemma}\label{SL2N}
Suppose there is some left-invariant $T$ such that $Ric(g)=cT$ for some left-invariant $g$ and some constant $c>0$. Let $V_1,V_2,V_3$ 
be a basis satisfying \eqref{NICE} in which $g$ and $T$ are diagonal. 
Suppose that $T_1\neq T_2$ and at least one of $T_1,T_2,T_3$ is $0$, where $T_i$ is given by \eqref{TV}. If $T$ is diagonal in some
other basis $X_1,X_2,X_3$ which preserves the Lie bracket relations \eqref{NICE},
then the representation of $T$ is the same, apart from possibly interchanging $T_1$ and $T_2$.
\end{lemma}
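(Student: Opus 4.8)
The plan is to mirror the argument for Lemma \ref{SL2Y}, again reducing everything to the two subcases furnished by Lemma \ref{SL2S}, but exploiting the zero entries to read off the conclusion almost immediately. First I would invoke Lemma \ref{SL20}: since $Ric(g)=cT$ is solvable and at least one of $T_1,T_2,T_3$ vanishes, in fact exactly two of them vanish and the remaining one is negative and is not $T_3$. Thus, up to the symmetry that interchanges the first two indices, I may assume $T_1<0$ and $T_2=T_3=0$, the case $T_2<0$, $T_1=T_3=0$ being entirely analogous. Note this configuration automatically has $T_1\neq T_2$, so it is consistent with the hypothesis.

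Next, by Lemma \ref{SL2C} any change of basis preserving \eqref{NICE} has the form $bda$ with $b,d,a$ as in \eqref{SL2CE}. Write $Y_1,Y_2,Y_3$ for the intermediate basis obtained by applying $bd$, so that $X_i=\sum_j a_{ji}Y_j$. Since $T$ is diagonal in the $X_1,X_2,X_3$ basis and $T_1\neq T_2$, Lemma \ref{SL2S} guarantees that $T$ is already diagonal in the $Y_1,Y_2,Y_3$ basis and that either $\sin(\theta)=0$ or $\cos(\theta)=0$. I would treat these two subcases in turn.

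In each subcase I would compute the diagonal entries $S_i=T(Y_i,Y_i)$ directly from the matrix $bd$ displayed in the proof of Lemma \ref{SL2S}, for instance $S_1=\cos^2(\theta)T_1+\sin^2(\theta)T_2$. Because $T_2=T_3=0$, only $T_1$ survives, and the vanishing off-diagonal term $T(Y_2,Y_3)=-\cosh(\phi)\sinh(\phi)\bigl(\sin^2(\theta)T_1+\cos^2(\theta)T_2+T_3\bigr)$ forces $\sinh(\phi)=0$ in the subcase $\cos(\theta)=0$ (where the bracket equals $T_1\neq0$), while $\phi$ stays free but irrelevant in the subcase $\sin(\theta)=0$. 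Either way one obtains $(S_1,S_2,S_3)=(T_1,0,0)$ when $\sin(\theta)=0$ and $(S_1,S_2,S_3)=(0,T_1,0)$ when $\cos(\theta)=0$.

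Finally I would apply $a$ via $T(X_i,X_k)=\sum_j a_{ji}a_{jk}S_j$, giving $T(X_1,X_1)=S_2$, $T(X_2,X_2)=a_{12}^2S_1+a_{13}^2S_3$, $T(X_3,X_3)=a_{13}^2S_1+a_{12}^2S_3$, and $T(X_2,X_3)=a_{12}a_{13}(S_1+S_3)$. Imposing $T(X_2,X_3)=0$ and using $a_{12}^2-a_{13}^2=1$ (so $a_{12}\neq0$) forces $a_{13}=0$ exactly when $S_1+S_3\neq0$ and leaves $a_{13}$ free otherwise; in both subcases the diagonal of $T$ in the $X_1,X_2,X_3$ basis comes out as $(T_1,0,0)$ or $(0,T_1,0)$, that is, the original representation up to interchanging the first two slots, which is the claim. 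I expect the only delicate point to be the careful bookkeeping of the two off-diagonal constraints $T(Y_2,Y_3)=0$ and $T(X_2,X_3)=0$: these are precisely what remove the apparent freedom in $\phi$ and in $a_{13}$ and pin the representation down to the two allowed permutations.
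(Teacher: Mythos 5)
Your proposal is correct and follows exactly the route the paper indicates for this lemma (which it leaves as a sketch): pin down the form of $T$ via Lemma \ref{SL20}, apply Lemma \ref{SL2S} to reduce to the subcases $\sin(\theta)=0$ and $\cos(\theta)=0$, and then directly evaluate the representation of $T$ through the $bd$ and $a$ changes of basis. The computations of $S_i$, of $T(Y_2,Y_3)$, and of the $X$-basis entries all check out, and they yield the claimed conclusion that the diagonal is $(T_1,0,0)$ or $(0,T_1,0)$.
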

With Lemma \ref{SL2Y}, it is straightforward to verify that if $Ric(g)=cT$ for some $T$ such that $T_1\neq T_2$ and none of $T_1,T_2,T_3$ is $0$, then 
the hypothesis of Lemma \ref{UNIQUE} holds because any allowable change of basis is trivial. Therefore the pair 
$(g,c)$ solving $Ric(g)=cT$ is unique. If one of $T_1,T_2,T_3$ is $0$, there are infinitely many solutions, but Lemma \ref{SL2N} shows that the $c$ value is the same for all solutions.

\vspace{0.3cm}
\noindent
$\textbf{Case}$ $\textbf{Two}$- $T_1= T_2$. 

 In this case, the change of basis corresponding to the $b$ matrix leaves the representation of $T$ the same as it 
was before. Then Lemma \ref{SL2S} implies that 
\begin{align*}
T(Y_2,Y_3)=-\cosh(\phi)\sinh(\phi)T_2-\cosh(\phi)\sinh(\phi)T_3=0,
\end{align*}
 so we must have $T_2+T_3=T_1+T_3=0$ or $\sinh(\phi)=0$. 

If $T_2+T_3=T_1+T_3=0$ then we have established that 
the solutions of \eqref{THEQc} are those which satisfy $v_3=v_1+v_2$ and $c=\frac{8}{T_3}$, so the solutions are non-unique and 
Lemma \ref{UNIQUE} implies that the solutions $(g,c)$ of $Ric(g)=cT$ are non-unique. 
However, direct computation reveals that in any allowable change of basis, the value of $T_3$ remains unchanged, so 
$c$ is the same for all solutions. 

Our final case is that $T_1=T_2$, but $T_2+T_3\neq 0$, so $\sinh(\phi)=0$.
\begin{lemma}
Suppose $T$ is diagonal 
in a basis $V_1,V_2,V_3$ such that $T_1=T_2$ and $T_2+T_3\neq 0$ where $T_i$ is given in \eqref{TV}.
If $T$ is diagonal in any other basis $X_1,X_2,X_3$ which preserves the Lie bracket relations, 
then any metric which is diagonal in the $V_1,V_2,V_3$ basis with components solving \eqref{THEQ} is also diagonal in the basis $X_1,X_2,X_3$. 
\end{lemma}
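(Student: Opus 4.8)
The plan is to exploit the hypothesis $T_2+T_3\neq 0$ together with the fact that any metric solving \eqref{THEQ} with $T_1=T_2$ is forced to satisfy $v_1=v_2$. The discussion preceding the lemma has already reduced us to the situation $\sinh(\phi)=0$, so the change of basis from $V_1,V_2,V_3$ to $X_1,X_2,X_3$ is simply $ba$, where $b$ is the rotation by $\theta$ in the $V_1V_2$-plane and $a$ is the matrix in \eqref{SL2CE} with parameters $a_{12},a_{13}$ satisfying $a_{12}^2-a_{13}^2=1$. The whole argument hinges on showing that this $a$-change is trivial enough to preserve diagonality of $g$.

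First I would establish that $v_1=v_2$. Writing out the first two equations of \eqref{THEQ} and using $T_1=T_2$, the common factor $2x_3=2(v_1+v_2+v_3)>0$ cancels and I am left with $v_1x_2=v_2x_1$. Substituting $x_1=v_2-v_1-v_3$ and $x_2=v_1-v_2-v_3$ and simplifying factors this as $(v_1-v_2)(v_1+v_2-v_3)=0$. The second factor cannot vanish: if $v_3=v_1+v_2$, then a direct substitution into \eqref{THEQ} gives $T_2=-8v_1v_2(v_1+v_2)$ and $T_3=8v_1v_2(v_1+v_2)$, so $T_2=-T_3$, contradicting $T_2+T_3\neq 0$. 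Hence $v_1=v_2$.

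Next I would pass to the intermediate basis $Y_i=\sum_j b_{ji}V_j$ given by the rotation. Because $T_1=T_2$, the tensor $T$ stays diagonal in the $Y$ basis with unchanged components (this is the content of Lemma \ref{SL2S}), and because $v_1=v_2$, the same rotation leaves $g$ diagonal in the $Y$ basis, with $g(Y_1,Y_1)=g(Y_2,Y_2)=v_1$ and $g(Y_3,Y_3)=v_3$. The key step is then to feed the hypothesis that $T$ is diagonal in $X_1,X_2,X_3$ into the remaining $a$-change. Computing the off-diagonal entry $T(X_2,X_3)$ with $X_2=a_{12}Y_1\mp a_{13}Y_3$ and $X_3=a_{13}Y_1\mp a_{12}Y_3$, the cross terms $T(Y_1,Y_3)$ drop out and one finds $T(X_2,X_3)=a_{12}a_{13}(T_1+T_3)$. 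Since $T_1+T_3=T_2+T_3\neq 0$ and $a_{12}^2=1+a_{13}^2\neq 0$, diagonality forces $a_{13}=0$, whence $a_{12}=\pm 1$.

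Finally, with $a_{13}=0$ the matrix $a$ is a signed permutation sending each $Y_i$ to a scalar multiple of a single $X$ basis vector; explicitly $X_1=\pm Y_2$, $X_2=\pm Y_1$ and $X_3=\mp Y_3$. A metric that is diagonal in the $Y$ basis is therefore automatically diagonal in the $X$ basis, which is exactly the conclusion. I expect the only genuine content to be the two vanishing computations — ruling out $v_3=v_1+v_2$ via $T_2+T_3\neq 0$, and the identity $T(X_2,X_3)=a_{12}a_{13}(T_1+T_3)$ that pins down $a_{13}=0$ — since everything afterward is the routine observation that a rotation in a plane on which $g$ restricts to a multiple of the identity, followed by a signed permutation, preserves diagonality.
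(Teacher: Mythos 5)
Your proposal is correct and follows essentially the same route as the paper: invoke Lemma \ref{SL2S} to force $\sinh(\phi)=0$, use $T_1=T_2$ and $T_2+T_3\neq 0$ in \eqref{THEQ} to conclude $v_1=v_2$ (ruling out $v_3=v_1+v_2$), compute $T(X_2,X_3)=a_{12}a_{13}(T_2+T_3)=0$ to kill $a_{13}$, and observe that a rotation in a plane where $g$ is a multiple of the identity followed by a signed permutation preserves diagonality. Your write-up is in fact slightly more explicit than the paper's (you spell out the factorisation $(v_1-v_2)(v_1+v_2-v_3)=0$ and deduce $a_{13}=0$ immediately from $a_{12}^2-a_{13}^2=1$, which the paper defers to the paragraph after the proof), but the substance is identical.
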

\begin{proof}
Lemma \ref{SL2S} implies that $\sinh(\phi)=0$ so $\phi=0$ and the $d$ matrix is the identity. 
In this case, computation reveals that 
$T$ is diagonal after the $bd$ change and has the same components. Now we know that 
$T$ is diagonal in the 
$X_1,X_2,X_3$ basis, but since $X_1,X_2,X_3$ is related to $Y_1,Y_2,Y_3$ by the 
$a$ matrix and the representation of $T$ in the $Y_1,Y_2,Y_3$ is still $diag(T_1,T_2,T_3)$, we find that $T(X_2,X_3)=a_{12}a_{13}(T_2+T_3)=0$.
Therefore, one of $a_{12}$ or $a_{13}$ is $0$.

Since $T_1=T_2$, the equations of \eqref{THEQc} reveal to us that $\frac{x_1}{v_1}=\frac{x_2}{v_2}$ or $x_3=v_1+v_2+v_3=0$. Since $x_3$ must be positive, we conclude that $v_1=v_2$ or $v_3=v_1+v_2$. We can exclude the second case $v_3=v_1+v_2$ since $T_2+T_3\neq 0$, so we can now assume that $v_1=v_2$.  
Due to this constraint, the 
diagonal metric remains diagonal after the matrix $b$ change. It trivially remains diagonal after the $d$ change and the $a$ change since $\phi=0$ and 
one 
of $a_{12}$ or $a_{13}$ is $0$. 
\end{proof}
If $T_1=T_2$ and $T_2+T_3\neq 0$, in order to have a solution of 
\eqref{THEQ}, the numbers $T_1,T_2,T_3$ must satisfy the conditions of Lemma 
\ref{SL2--+} so the solution of 
\eqref{THEQ} is unique. The previous proof establishes that $\phi=0$
and one of $a_{12}$ or $a_{13}$ equals $0$. Due to the constraint $1=a_{12}^2-a_{13}^2$, 
we conclude that $a_{13}=0$. Therefore, our change of basis is simply an interchanging of the vectors $V_1,V_2$, so 
the representation of $T$ in the new basis is exactly the same as the old. Therefore 
the new components of $T$ still satisfy the hypothesis of Lemma \ref{SL2--+} so the
solution of \eqref{THEQ} is again unique and Lemma \ref{UNIQUE} implies that the pair $(g,c)$
solving $Ric(g)=cT$ is unique.  

This concludes our analysis on $SL(2)$. 
\section{The Remaining Four Three-Dimensional Unimodular Lie Groups}
The remaining four three-dimensional unimodular Lie groups are the Euclidean group $E(2)$, the Minkowski group $E(1,1)$, the 
Heisenberg group $H_3$, and the group $\mathbb{R}^3$. 

The following theorems are the main results for the first three of these Lie groups.  
\begin{theorem}\label{E2T}
Let $T$ be a left-invariant $(0,2)$ tensor field on $E(2)$. There 
exists a left-invariant Riemannian metric $g$ and a positive constant $c>0$ such that 
$Ric(g)=cT$ if and only if $T$ is diagonalisable with $T=diag(T_1,T_2,T_3)$ in a basis satisfying \eqref{NICE} with 
$\lambda_1=\lambda_2=2$, $\lambda_3=0$ and one of the following conditions is satisfied:

(i) $T_1=T_2=T_3=0$

(ii) $T_1+T_2>0$, $T_3<0$, $T_1T_2<0$.

In case (i), there are infinitely many solutions $(g,c)$ of $Ric(g)=cT$. In case (ii), the solution $(g,c)$ is unique. 
\end{theorem}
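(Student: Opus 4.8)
The plan is to apply Lemma \ref{BS} and Lemma \ref{UNIQUE} exactly as in the $SO(3)$ and $SL(2)$ sections, so that the work splits into an existence analysis of the system \eqref{THEQ} and a change-of-basis analysis for the uniqueness. First I would specialize the quantities $x_1,x_2,x_3$ to $E(2)$. Setting $\lambda_1=\lambda_2=2$ and $\lambda_3=0$ gives $x_1=v_2-v_1$, $x_2=v_1-v_2=-x_1$ and $x_3=v_1+v_2$, so that \eqref{THEQ} becomes the system
\begin{align*}
2v_1(v_1^2-v_2^2)=T_1, \qquad 2v_2(v_2^2-v_1^2)=T_2, \qquad -2v_3(v_1-v_2)^2=T_3.
\end{align*}
Reading off signs immediately proves necessity: if $v_1=v_2$ then $T_1=T_2=T_3=0$ (case (i)), while if $v_1\neq v_2$ then $T_3<0$ (since $v_3>0$), $T_1T_2=-4v_1v_2(v_1^2-v_2^2)^2<0$, and $T_1+T_2=2(v_1-v_2)^2(v_1+v_2)>0$, which is exactly case (ii).

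For sufficiency in case (ii) I would introduce $s=v_1+v_2$ and $d=v_1-v_2$. The first two equations rearrange to $T_1+T_2=2d^2s$ and $T_1-T_2=2ds^2$; since $T_1T_2<0$ forces $T_1\neq T_2$, dividing gives $s=d(T_1-T_2)/(T_1+T_2)$ and then $d^3=(T_1+T_2)^2/(2(T_1-T_2))$, which has a unique real root. A short sign check shows $s>0$, and $v_1,v_2>0$ (equivalently $s>|d|$) is equivalent to $(T_1-T_2)^2>(T_1+T_2)^2$, i.e. to $T_1T_2<0$; finally the third equation determines $v_3=-T_3/(2d^2)>0$ precisely because $T_3<0$. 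Thus \eqref{THEQ} has a unique positive solution in case (ii). Case (i), $T=0$, is solved by any triple with $v_1=v_2$ and $v_3$ arbitrary, yielding infinitely many solutions.

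For uniqueness of the pair $(g,c)$ I would first prove, in the style of Lemma \ref{SL2C}, that a change of basis preserving \eqref{NICE} maps $V_1,V_2$ into their span and has the form $X_1=p_1V_1+p_2V_2$, $X_2=c(-p_2V_1+p_1V_2)$, $X_3=aV_1+bV_2+cV_3$ with $c=\pm 1$; the continuous freedom here is a conformal map of the $(V_1,V_2)$-plane. Imposing that $T$ remain diagonal then forces $cp_1p_2(T_2-T_1)=0$, so in case (ii), where $T_1\neq T_2$, either $p_1=0$ or $p_2=0$, and the remaining off-diagonal conditions force $a=b=0$. Hence every admissible change of basis is a scaling or swap of $V_1,V_2$ together with $V_3\mapsto\pm V_3$, under which a diagonal metric stays diagonal. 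Condition (i) of Lemma \ref{UNIQUE} is then satisfied, and combined with the uniqueness of the \eqref{THEQ} solution this gives uniqueness of $(g,c)$ in case (ii); in case (i) the free $v_3$ and the rotational freedom in the $(V_1,V_2)$-plane produce infinitely many solutions.

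The main obstacle is the uniqueness step. Unlike the $SO(3)$ case, where the change-of-basis group is the compact $SO(3)$, here the admissible changes of basis include a continuous conformal family of the plane spanned by $V_1,V_2$, so one must verify carefully that the hypothesis $T_1\neq T_2$ (guaranteed by $T_1T_2<0$) collapses this family to the discrete symmetries that preserve diagonality, and that no extra off-diagonal components of $g$ can appear through the $aV_1+bV_2$ part of $X_3$.
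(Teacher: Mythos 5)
Your proposal is correct and follows essentially the same route as the paper: Lemma \ref{BS} together with a direct solution of \eqref{THEQ}, and then Lemma \ref{UNIQUE} combined with a change-of-basis analysis that matches Lemma \ref{E2CB}(i). The paper leaves these computations implicit as ``simple''/``elementary''; your substitution $s=v_1+v_2$, $d=v_1-v_2$ and your verification that the conformal freedom in the $(V_1,V_2)$-plane collapses to scalings and swaps once $T_1\neq T_2$ supply exactly the details it omits.
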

\begin{theorem}\label{E11T}
Let $T$ be a left-invariant $(0,2)$ tensor field on $E(1,1)$. There 
exists a left-invariant Riemannian metric $g$ and a positive constant $c>0$ such that 
$Ric(g)=cT$ if and only if $T$ is diagonalisable with $T=diag(T_1,T_2,T_3)$ in a basis satisfying \eqref{NICE} with 
$\lambda_1=2$, $\lambda_2=-2$, $\lambda_3=0$ and one of the following conditions is satisfied:

(i) $T_1=T_2=0$, $T_3<0$

(ii) $T_1+T_2>0$, $T_3<0$, $T_1T_2<0$.

In case (i), there are infinitely many pairs $(g,c)$ solving $Ric(g)=cT$, but the $c$ value of all solutions is the same. 
In case (ii), the pair $(g,c)$ is unique.
\end{theorem}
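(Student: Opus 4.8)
The plan is to follow the pattern used for $SO(3)$ and $SL(2)$. First I would invoke Lemma \ref{BS} to reduce the problem to the algebraic system \eqref{THEQ} (equivalently \eqref{THEQc}) in a basis $V_1,V_2,V_3$ satisfying \eqref{NICE} with $(\lambda_1,\lambda_2,\lambda_3)=(2,-2,0)$ in which $T=\operatorname{diag}(T_1,T_2,T_3)$. Substituting these $\lambda$-values into the expressions for $x_1,x_2,x_3$ gives $x_1=-(v_1+v_2)$, $x_2=v_1+v_2$ and $x_3=v_1-v_2$, so that \eqref{THEQ} becomes
\begin{align*}
 T_1=2v_1(v_1^2-v_2^2),\qquad T_2=-2v_2(v_1^2-v_2^2),\qquad T_3=-2v_3(v_1+v_2)^2.
\end{align*}
(The first two equations coincide with those obtained for $E(2)$ in Theorem \ref{E2T}; only the third differs.) Reading off signs gives the necessity direction at once: since $v_1,v_2,v_3>0$ we always have $T_3<0$, while $T_1T_2=-4v_1v_2(v_1^2-v_2^2)^2\le0$ and $T_1+T_2=2(v_1-v_2)^2(v_1+v_2)\ge0$, both strict exactly when $v_1\neq v_2$. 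Hence a solution forces either $v_1=v_2$ (so $T_1=T_2=0$, $T_3<0$, which is case (i)) or $v_1\neq v_2$ (so $T_1+T_2>0$, $T_1T_2<0$, $T_3<0$, which is case (ii)).

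For sufficiency and for the uniqueness of the solution of \eqref{THEQ}, I would split into the two cases. In case (i), $T_1=T_2=0$ forces $x_3=v_1-v_2=0$, i.e. $v_1=v_2$; any such pair together with any $v_3>0$ solves \eqref{THEQc}, and since $Ric(V_3,V_3)=2x_1x_2/(v_1v_2)=-8$ for these data, every solution has $c=8/(-T_3)$. This simultaneously exhibits infinitely many solutions and fixes $c$. In case (ii) I would pass to $a=v_1+v_2>0$ and $b=v_1-v_2$, in which the first two equations read $T_1+T_2=2ab^2$ and $T_1-T_2=2a^2b$. Multiplying gives $(ab)^3=(T_1^2-T_2^2)/4$ and dividing gives $a/b=(T_1-T_2)/(T_1+T_2)$, which determine $a^2$ and $b^2$ uniquely, with $a>0$ and the sign of $b$ fixed by that of $T_1-T_2$. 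The hypothesis $T_1T_2<0$ is precisely what forces $a^2>b^2$, hence $v_1=(a+b)/2>0$ and $v_2=(a-b)/2>0$, and then $v_3=-T_3/(2a^2)>0$. The solution is therefore unique, establishing condition (ii) of Lemma \ref{UNIQUE}.

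It remains to pin down the uniqueness of the pair $(g,c)$, which requires classifying every basis $X_1,X_2,X_3$ satisfying \eqref{NICE} with the same $\lambda$-values, in the spirit of Lemma \ref{SL2C}. The structural point I would use is that $[\mathfrak{g},\mathfrak{g}]=\operatorname{span}(V_1,V_2)$ is the unique two-dimensional abelian subalgebra of the Lie algebra, because $\operatorname{ad}_{V_3}$ is invertible on it (eigenvalues $\pm2$). Consequently $X_1,X_2$ must lie in this plane and $X_3=\beta V_3+d$ with $\beta=\pm1$ and $d\in\operatorname{span}(V_1,V_2)$; writing $X_1=pV_1+qV_2$ and $X_2=\beta(qV_1+pV_2)$ exhausts the admissible changes of basis. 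Imposing that $T$ remain diagonal yields $T(X_1,X_2)=\beta pq(T_1+T_2)$. In case (ii), where $T_1+T_2>0$, this forces $pq=0$, and the two remaining off-diagonal conditions then force $d=0$; thus every admissible change of basis is an interchange of $V_1,V_2$ combined with rescalings and sign changes, under all of which the diagonal $g$ stays diagonal and the transformed components of $T$ still satisfy case (ii). Lemma \ref{UNIQUE} then gives uniqueness of $(g,c)$.

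In case (i) the argument changes because $T_1+T_2=0$ removes the constraint $pq=0$, so $T$ stays diagonal in a much larger family of bases, consistent with the non-uniqueness of $g$. The saving observation is that in every admissible basis the components of $T$ are $(0,0,T_3)$ with $T(X_3,X_3)=\beta^2T_3=T_3$ unchanged, so the formula $c=8/(-T_3)$ yields the same $c$ for all solutions even though $g$ is far from unique. I expect the main obstacle to be exactly this change-of-basis analysis: correctly identifying the full family of bases preserving \eqref{NICE}, and then carefully separating the generic case (ii) from the degenerate case (i), where the vanishing of $T_1+T_2$ enlarges the group of admissible changes and one must instead track the invariance of $T_3$.
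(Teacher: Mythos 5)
Your proposal is correct and follows essentially the same route as the paper: reduce to the algebraic system \eqref{THEQ} via Lemma \ref{BS}, solve it directly by sign analysis (your substitution $a=v_1+v_2$, $b=v_1-v_2$ correctly fills in the computation the paper dismisses as ``straightforward'' in Lemma \ref{E11S}), classify the admissible changes of basis (your derived-algebra argument recovers exactly the constraints of Lemma \ref{E2CB}(ii)), and apply Lemma \ref{UNIQUE}, tracking the invariance of $T_3$ in the degenerate case (i). No gaps.
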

\begin{theorem}\label{HT}
Let $T$ be a left-invariant $(0,2)$ tensor on $H_3$. There 
exists a left-invariant Riemannian metric $g$ and a positive constant $c>0$ such that 
$Ric(g)=cT$ if and only if $T$ is diagonalisable with $T=diag(T_1,T_2,T_3)$ in a basis satisfying \eqref{NICE} with 
$\lambda_1=2$, $\lambda_2=\lambda_3=0$ and $T_1>0$, $T_2<0$ and $T_3<0$. 
The pair $(g,c)$ is unique.  
\end{theorem}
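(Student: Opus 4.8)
The plan is to follow the same two-stage template used for $SO(3)$ and $SL(2)$: first settle the existence and uniqueness of solutions of \eqref{THEQ} in a fixed basis, then invoke Lemma \ref{UNIQUE} to upgrade this to uniqueness of the pair $(g,c)$. The crucial simplification on the Heisenberg group is that the bracket relations make $x_1,x_2,x_3$ completely explicit in terms of $v_1$ alone, so there is no cubic equation to analyze as in the previous sections.

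First I would substitute $\lambda_1=2$ and $\lambda_2=\lambda_3=0$ into the defining formulas for $x_1,x_2,x_3$, obtaining $x_1=-v_1$ and $x_2=x_3=v_1$. Feeding this into \eqref{THEQ} collapses the system to
\begin{align*}
2v_1^3=T_1,\qquad -2v_1^2v_2=T_2,\qquad -2v_1^2v_3=T_3.
\end{align*}
The requirement that $v_1,v_2,v_3$ be positive immediately forces $T_1>0$, $T_2<0$, $T_3<0$, which is the asserted signature. Conversely, under this signature the first equation determines $v_1=(T_1/2)^{1/3}>0$ uniquely, and then the remaining two give $v_2=-T_2/(2v_1^2)>0$ and $v_3=-T_3/(2v_1^2)>0$ uniquely. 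Hence \eqref{THEQ} has a positive solution exactly when $T_1>0$, $T_2<0$, $T_3<0$, and that solution is unique. By Lemma \ref{BS} this settles existence, and it verifies hypothesis (ii) of Lemma \ref{UNIQUE} at once.

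For uniqueness of $(g,c)$ I would then describe the admissible changes of basis, as in the earlier sections. Since $V_1$ spans both the center and the derived algebra of $\mathfrak{g}$ (the only nonzero bracket being $[V_2,V_3]=2V_1$), any basis $X_1,X_2,X_3$ satisfying \eqref{NICE} must have $X_1=\alpha V_1$ for some $\alpha\neq 0$, with $X_2,X_3$ completing the basis subject only to the constraint coming from $[X_2,X_3]=2X_1$. Imposing that $T$ stay diagonal forces the $V_1$-components of $X_2$ and $X_3$ to vanish (because $T_1\neq 0$) and leaves the single relation $\beta_2\gamma_2T_2+\beta_3\gamma_3T_3=0$, where $X_2=\beta_2V_2+\beta_3V_3$ and $X_3=\gamma_2V_2+\gamma_3V_3$.

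The key observation, and the point that makes the argument close cleanly, is that the existence formulas give $v_2=-T_2/(2v_1^2)$ and $v_3=-T_3/(2v_1^2)$, so $v_2$ and $v_3$ are proportional to $-T_2$ and $-T_3$ with the same constant. Hence the only possibly nonzero off-diagonal metric entry is
\begin{align*}
g(X_2,X_3)=\beta_2\gamma_2v_2+\beta_3\gamma_3v_3=-\frac{1}{2v_1^2}\bigl(\beta_2\gamma_2T_2+\beta_3\gamma_3T_3\bigr)=0,
\end{align*}
the entries involving $X_1$ vanishing automatically since $V_1$ is $g$-orthogonal to $V_2,V_3$. Thus $g$ is diagonal in every basis where $T$ is diagonal and \eqref{NICE} holds, verifying hypothesis (i) of Lemma \ref{UNIQUE}, so $(g,c)$ is unique. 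I do not anticipate a genuine obstacle: the only step needing care is pinning down the admissible changes of basis, but the centrality of $V_1$ makes that routine, after which the proportionality of the metric components to the tensor components does all the remaining work.
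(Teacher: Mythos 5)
Your proposal is correct and follows the paper's own route: explicitly solving \eqref{THEQ} (which collapses to $2v_1^3=T_1$, $-2v_1^2v_2=T_2$, $-2v_1^2v_3=T_3$, giving Lemma \ref{HS}), then characterising the admissible changes of basis (your description of them matches case (iii) of Lemma \ref{E2CB}) and applying Lemma \ref{UNIQUE}. In fact you supply the ``elementary computation'' the paper omits — the observation that $v_2,v_3$ are proportional to $-T_2,-T_3$ with a common constant, which forces $g(X_2,X_3)=0$ whenever $T(X_2,X_3)=0$.
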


As we have done previously, to prove the existence components of these theorems we fix a basis $V_1,V_2,V_3$ satisfying \eqref{NICE} and solve 
\eqref{THEQc} with $T_i$ given by \eqref{TV}. Since $\lambda_3=0$ for each Lie group, this task is simple and the results are presented in the following 
lemmas. 
\begin{lemma}\label{E2E}
Suppose $\lambda_1=\lambda_2=2$ and $\lambda_3=0$. Then \eqref{THEQc} has a solution satisfying $v_1,v_2,v_3>0$ if 
and only if:

(i) $T_1=T_2=T_3=0$ or

(ii) $T_3<0$, $T_1+T_2>0$ and $T_1T_2<0$. 

The solution of \eqref{THEQc} is non-unique in (i) and is unique in (ii). 
\end{lemma}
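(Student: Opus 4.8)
The plan is to specialise the system \eqref{THEQc} to $\lambda_1=\lambda_2=2$, $\lambda_3=0$ and read off everything by direct inspection, since these degenerate $\lambda$-values eliminate the cubic-equation analysis that was needed on $SO(3)$ and $SL(2)$. First I would substitute the $\lambda$'s into the formulas for $x_1,x_2,x_3$, obtaining $x_1=v_2-v_1$, $x_2=v_1-v_2=-x_1$ and $x_3=v_1+v_2$. Feeding these into \eqref{THEQc} yields the three scalar equations
\[
 cT_1=\frac{2(v_1^2-v_2^2)}{v_2v_3},\qquad cT_2=-\frac{2(v_1^2-v_2^2)}{v_1v_3},\qquad cT_3=-\frac{2(v_1-v_2)^2}{v_1v_2}.
\]
The key structural observations are that the third equation forces $cT_3\le 0$, with equality exactly when $v_1=v_2$, and that the first two equations share the factor $v_1^2-v_2^2$ but carry opposite signs.

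For necessity I would split on whether $v_1=v_2$. If $v_1=v_2$ then every right-hand side vanishes, so (as $c>0$) all three $T_i$ are zero, which is case (i). If $v_1\neq v_2$ then $v_1^2-v_2^2\neq 0$, so the third equation gives $T_3<0$; the first two equations then have opposite signs, giving $T_1T_2<0$; and adding the first two yields $c(T_1+T_2)=\frac{2(v_1-v_2)^2(v_1+v_2)}{v_1v_2v_3}>0$, giving $T_1+T_2>0$. This is precisely case (ii).

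For sufficiency, in case (i) I take $v_1=v_2$ together with arbitrary positive $v_3$ and arbitrary $c>0$, which solves the system and exhibits infinitely many solutions. In case (ii) the ratio of the first two equations forces $v_1/v_2=-T_1/T_2$, so I would set $r:=-T_1/T_2$, noting $r>0$ since $T_1T_2<0$ and $r\neq 1$ since $T_1+T_2>0$. Then the third equation determines $c>0$ and the first determines $v_3$. The one point needing care, and the main (modest) obstacle, is verifying $v_3>0$: this reduces to checking $(r^2-1)/T_1>0$, which I would confirm by splitting into the two sign patterns $T_1>0>T_2$ and $T_2>0>T_1$ and using $T_1+T_2>0$ to decide whether $r>1$ or $r<1$. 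The remaining equation then holds automatically, since $r$ was chosen exactly to match the ratio $T_1/T_2=-v_1/v_2$.

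Finally, uniqueness in case (ii) follows directly from this construction: the ratio $r=v_1/v_2$, then the constant $c$, and then $v_3$ are each forced in turn (up to the overall scaling of $(v_1,v_2,v_3)$), so the solution of \eqref{THEQc} is unique. I expect no genuinely hard step here; the whole argument is a bookkeeping exercise in the signs of the three right-hand sides.
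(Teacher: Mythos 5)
Your proof is correct and is essentially the argument the paper intends: the paper omits the proof of Lemma \ref{E2E} entirely, remarking only that the task is ``simple'' because $\lambda_3=0$, and your direct substitution $x_1=v_2-v_1=-x_2$, $x_3=v_1+v_2$ followed by the sign bookkeeping (including the verification that $v_3>0$ and that the second equation is automatically consistent) is exactly the computation being alluded to.
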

\begin{lemma}\label{E11S}
Suppose $\lambda_1=2$, $\lambda_2=-2$ and $\lambda_3=0$. Then \eqref{THEQc} has a solution satisfying $v_1,v_2,v_3>0$ if 
and only if:

(i) $T_1=T_2=0$ and $T_3<0$ or 

(ii) $T_3<0$, $T_1+T_2>0$ and $T_1T_2<0$. 

The solution of \eqref{THEQc} is non-unique in (i) but $c=-\frac{8}{T_3}$ for all solutions. 
The solution is unique in (ii). 
\end{lemma}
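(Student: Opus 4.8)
The plan is to exploit the fact that the vanishing of $\lambda_3$ removes the cubic structure that was needed for $SO(3)$ and $SL(2)$, so that \eqref{THEQc} can be solved directly by hand. First I would substitute $\lambda_1=2$, $\lambda_2=-2$, $\lambda_3=0$ into the defining formulas for $x_1,x_2,x_3$ to obtain
\[
 x_1=-(v_1+v_2),\qquad x_2=v_1+v_2,\qquad x_3=v_1-v_2,
\]
so in particular $x_1=-x_2$. Writing out \eqref{THEQc} for $i=1,2,3$ then gives the three scalar equations
\[
 cT_1=\frac{2(v_1^2-v_2^2)}{v_2v_3},\qquad cT_2=-\frac{2(v_1^2-v_2^2)}{v_1v_3},\qquad cT_3=-\frac{2(v_1+v_2)^2}{v_1v_2},
\]
which is the system I would analyse for $v_1,v_2,v_3>0$ and $c>0$.

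For necessity I would read off the constraints directly. The third equation has a strictly negative right-hand side when $v_1,v_2>0$, so $c>0$ forces $T_3<0$. Clearing denominators in the first two equations and adding them eliminates the common factor $v_1^2-v_2^2$ and, since $c,v_3>0$, yields the ratio relation $T_1v_2+T_2v_1=0$. I would then split on whether $v_1=v_2$. If $v_1=v_2$ the first two equations both collapse to $0$, forcing $T_1=T_2=0$, which is case (i). If $v_1\neq v_2$, the first two equations give $T_1,T_2\neq 0$ and $T_1/T_2=-v_1/v_2<0$, hence $T_1T_2<0$; moreover adding the first two equations in the form $c(T_1+T_2)=\frac{2(v_1-v_2)^2(v_1+v_2)}{v_1v_2v_3}$ shows the right-hand side is positive, so $T_1+T_2>0$. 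This is case (ii).

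For sufficiency and uniqueness I would reverse the computation. In case (i) I would take any $v_1=v_2>0$, any $v_3>0$, and $c=-8/T_3$: the first two equations reduce to $0=0$ and the third to $cT_3=-8$, so all three hold. Since $v_3$ remains free even after fixing the overall scale, there are infinitely many solutions, all with $c=-8/T_3$; this is the same phenomenon recorded in Lemmas \ref{SO3+00} and \ref{SL20}. In case (ii) I would set $v_1=|T_1|$ and $v_2=|T_2|$, which satisfies the ratio relation $T_1v_2+T_2v_1=0$ because $T_1T_2<0$, then solve the third equation for $c=-\frac{2(|T_1|+|T_2|)^2}{T_3|T_1||T_2|}>0$ and the first equation for $v_3$; the remaining (second) equation then holds automatically from the ratio relation. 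Uniqueness follows because scale-invariance of \eqref{THEQc} lets me normalise $v_2=1$, after which the ratio relation fixes $v_1$, the third equation fixes $c$, and the first equation fixes $v_3$, so no freedom survives.

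The algebraic verifications are routine; the single step requiring genuine care is the sign bookkeeping in case (ii), where $v_3$ works out to be a positive multiple of $(T_1^2-T_2^2)/T_1$. I would confirm that this is positive precisely under the hypothesis $T_1+T_2>0$ by checking the two sign arrangements separately: if $T_1>0>T_2$ then $T_1+T_2>0$ is equivalent to $|T_1|>|T_2|$, and if $T_1<0<T_2$ it is equivalent to $|T_2|>|T_1|$, and in each case this is exactly what makes $(T_1^2-T_2^2)/T_1$ positive.
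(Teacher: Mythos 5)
Your proposal is correct and is essentially the argument the paper intends: the paper omits the details for the $\lambda_3=0$ groups, stating only that solving \eqref{THEQc} directly "is simple," and your explicit computation (deriving $x_1=-x_2=-(v_1+v_2)$, $x_3=v_1-v_2$, extracting the relation $T_1v_2+T_2v_1=0$, and splitting on $v_1=v_2$ versus $v_1\neq v_2$) supplies exactly those routine details, including the correct sign analysis for $v_3>0$ and the value $c=-8/T_3$ in case (i).
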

\begin{lemma}\label{HS}
Suppose $\lambda_1=2$, and $\lambda_2=\lambda_3=0$. Then \eqref{THEQc} has a solution satisfying $v_1,v_2,v_3>0$ if 
and only if $T_1>0$, $T_2<0$ and $T_3<0$. 
The solution of \eqref{THEQc} is unique. 
\end{lemma}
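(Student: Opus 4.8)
The plan is to specialise the general formulas for $x_1,x_2,x_3$ to the present case $\lambda_1=2$, $\lambda_2=\lambda_3=0$, which immediately gives $x_1=-v_1$ and $x_2=x_3=v_1$. Feeding these into \eqref{THEQc}, and working directly with \eqref{THEQc} rather than the reformulation \eqref{THEQ} (which is convenient here precisely because $\lambda_3=0$ keeps the algebra explicit), reduces the system to
\begin{align*}
\frac{2v_1^2}{v_2v_3}=cT_1,\qquad -\frac{2v_1}{v_3}=cT_2,\qquad -\frac{2v_1}{v_2}=cT_3.
\end{align*}
For necessity of the sign conditions I would simply read these equations off: with $v_1,v_2,v_3>0$ and $c>0$, the first equation forces $T_1>0$, while the second and third each have a strictly negative left-hand side and so force $T_2<0$ and $T_3<0$.

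For sufficiency, given $T_1>0$ and $T_2,T_3<0$, I would solve the last two equations for $v_3=2v_1/\bigl(c(-T_2)\bigr)$ and $v_2=2v_1/\bigl(c(-T_3)\bigr)$, both automatically positive. Substituting these into the first equation eliminates $v_1$ entirely and collapses everything to the single relation $c=2T_1/(T_2T_3)$, which is positive exactly because $T_1>0$ and $T_2T_3>0$. Thus $c$ is forced to this value, and then any positive choice of $v_1$ determines $v_2,v_3$ and produces a genuine solution, establishing existence.

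For uniqueness I would observe that the computation above pins down $c$ uniquely and fixes the ratios $v_2/v_1=2/\bigl(c(-T_3)\bigr)$ and $v_3/v_1=2/\bigl(c(-T_2)\bigr)$; hence any two solution triples $(v_1,v_2,v_3)$ are scalar multiples of one another, which is exactly uniqueness up to scaling in the sense of Lemma \ref{BS}. I do not expect a genuine obstacle: the system is linear in $v_2,v_3$ once $v_1$ is fixed, and the only point deserving a moment's care is verifying that $c$ itself (and not merely the ratios $v_2/v_1$, $v_3/v_1$) is uniquely determined, which the elimination of $v_1$ from the first equation makes transparent. This argument runs parallel to those of Lemmas \ref{SO3+00} and \ref{SL20}, with the crucial difference that here both ratios are forced, so the solution is unique rather than coming in an infinite non-homothetic family.
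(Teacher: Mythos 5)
Your proof is correct and follows exactly the route the paper intends: the paper omits the argument as "simple" since $\lambda_3=0$, and your direct computation ($x_1=-v_1$, $x_2=x_3=v_1$, forcing $c=2T_1/(T_2T_3)$ and the ratios $v_2/v_1$, $v_3/v_1$) is the straightforward verification being alluded to. The sign analysis for necessity, the explicit construction for sufficiency, and the uniqueness up to scaling of $(v_1,v_2,v_3)$ are all handled correctly.
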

We will now examine the uniqueness component of 
Theorems \ref{E2T}, \ref{E11T} and \ref{HT}. To do this, we will use Lemma \ref{UNIQUE},
which means we need to find all other bases in which $T$ is diagonal and \eqref{NICE} is satisfied, 
then examine whether our $g$ solution of $Ric(g)=cT$ remains diagonal 
after these changes. The following lemma finds the allowable changes of basis. The proof is similar in style to the proof of 
Lemma~\ref{SL2C}, but is simplified by the fact that in all cases, we have $\lambda_3=0$. 
\begin{lemma}\label{E2CB}
Let $\lambda_1=2$, $\lambda_3=0$ and let $X_i=\sum_{j=1}^{3}a_{ji}V_j$ be a change of basis from a basis 
$V_1,V_2,V_3$ which satisfies 
\eqref{NICE}. If this new basis satisfies the relations \eqref{NICE} then  the following constraints must be satisfied depending on $\lambda_2$.

(i) $\lambda_2=2$: $a_{31}=a_{32}=0$, $a_{33}=\pm 1$, 
$a_{21}=\mp a_{12}$ and $a_{11}=\pm a_{22}$.

(ii) $\lambda_2=-2$: $a_{31}=a_{32}=0$, $a_{33}=\pm 1$, 
$a_{21}=\pm a_{12}$ and $a_{11}=\pm a_{22}$.

(iii) $\lambda_2=0$:   $a_{21}=a_{31}=0$ and $a_{11}=a_{22}a_{33}-a_{23}a_{32}$.
\end{lemma}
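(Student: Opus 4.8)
The plan is to impose the relations \eqref{NICE} directly on the new basis $X_1,X_2,X_3$ and read off the constraints on the entries of $a=(a_{ji})$, exploiting $\lambda_3=0$ to keep the bracket computations short and to isolate a characteristic ideal that dispatches the vanishing conditions at once. Since $\lambda_1=2$ and $\lambda_3=0$, the only nonzero brackets among the $V_i$ are $[V_2,V_3]=2V_1$ and $[V_3,V_1]=\lambda_2 V_2$, with $[V_1,V_2]=0$. Writing $X_i=a_{1i}V_1+a_{2i}V_2+a_{3i}V_3$, the requirement that the $X_i$ again satisfy \eqref{NICE} becomes the triple of vector identities $[X_1,X_2]=0$, $[X_2,X_3]=2X_1$ and $[X_3,X_1]=\lambda_2 X_2$, and the whole argument consists of expanding these brackets bilinearly and matching the coefficients of $V_1,V_2,V_3$.

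I would handle the two cases $\lambda_2=\pm 2$ simultaneously. Here $[\mathfrak g,\mathfrak g]=\mathrm{span}(V_1,V_2)$; as the derived subalgebra is intrinsic while the relations for the $X_i$ force $[\mathfrak g,\mathfrak g]=\mathrm{span}(X_1,X_2)$, I conclude $\mathrm{span}(X_1,X_2)=\mathrm{span}(V_1,V_2)$, which is precisely $a_{31}=a_{32}=0$. With these vanishing, $X_1,X_2$ lie in the $V_1,V_2$-plane and only $X_3$ carries a $V_3$-component. Expanding $[X_2,X_3]=2X_1$ then gives $a_{11}=a_{22}a_{33}$ (from the $V_1$-coefficient) and $a_{21}=-\tfrac{\lambda_2}{2}a_{12}a_{33}$ (from the $V_2$-coefficient), while $[X_3,X_1]=\lambda_2 X_2$ gives $a_{22}=a_{11}a_{33}$ together with a second expression for $a_{21}$.

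The crux is the short algebraic step of combining $a_{11}=a_{22}a_{33}$ with $a_{22}=a_{11}a_{33}$: these force $a_{33}^2=1$, hence $a_{33}=\pm 1$, after which $a_{11}=\pm a_{22}$ and $a_{21}=\mp a_{12}$ when $\lambda_2=2$ (respectively $a_{21}=\pm a_{12}$ when $\lambda_2=-2$), which is exactly (i) and (ii). For the Heisenberg case $\lambda_2=0$ the derived subalgebra collapses to the line $\mathrm{span}(V_1)=[\mathfrak g,\mathfrak g]$, which here also coincides with the centre, and the same intrinsic-ideal argument applied to $\mathrm{span}(X_1)=[\mathfrak g,\mathfrak g]$ gives $X_1\in\mathrm{span}(V_1)$, i.e. $a_{21}=a_{31}=0$; expanding the single surviving relation $[X_2,X_3]=2X_1$ then reads off $a_{11}=a_{22}a_{33}-a_{23}a_{32}$, establishing (iii). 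I expect no conceptual difficulty: the only points demanding care are the index and sign bookkeeping (the placement $a_{ji}$ and the Levi-Civita signs in \eqref{NICE}) and the extraction of $a_{33}=\pm 1$ from the two coupled relations, while everything else is routine bilinear expansion.
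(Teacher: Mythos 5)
Your argument is correct in substance and reaches all three conclusions, but it takes a more conceptual route than the paper, which gives no explicit proof here and instead points to the style of Lemma \ref{SL2C}: there the allowable changes of basis are found by factoring the matrix through rotations and boosts and then matching bracket coefficients directly. Your use of the derived subalgebra as an intrinsic ideal is a genuinely different and cleaner way to obtain the vanishing conditions: $[\mathfrak g,\mathfrak g]=\mathrm{span}(V_1,V_2)=\mathrm{span}(X_1,X_2)$ gives $a_{31}=a_{32}=0$ immediately when $\lambda_2=\pm2$, and $[\mathfrak g,\mathfrak g]=\mathrm{span}(V_1)=\mathrm{span}(X_1)$ gives $a_{21}=a_{31}=0$ when $\lambda_2=0$, after which the remaining identities are indeed routine bilinear expansion. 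This buys a shorter computation and makes it transparent why the $V_3$-direction (resp.\ the $V_2,V_3$-directions) cannot mix into $X_1,X_2$ (resp.\ $X_1$), at the cost of invoking the invariance of the derived subalgebra rather than staying entirely within coefficient bookkeeping.

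One step is under-justified as written. From $a_{11}=a_{22}a_{33}$ and $a_{22}=a_{11}a_{33}$ alone you get $a_{11}(1-a_{33}^2)=0$, which forces $a_{33}^2=1$ only when $a_{11}\neq 0$; the pair $a_{11}=a_{22}=0$ with $a_{33}$ arbitrary is not excluded by these two equations. To close this, note that if $a_{11}=a_{22}=0$ then invertibility of the change of basis (with $a_{31}=a_{32}=0$) forces $a_{12}a_{21}\neq 0$, and the remaining pair of coefficient equations, $a_{21}=-\tfrac{\lambda_2}{2}a_{12}a_{33}$ and $-2a_{33}a_{21}=\lambda_2 a_{12}$, combine to give $\lambda_2 a_{12}(a_{33}^2-1)=0$ and hence $a_{33}^2=1$ in this case as well. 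With that addition the sign bookkeeping you describe matches the lemma exactly: $a_{11}=\pm a_{22}$ and $a_{21}=-\tfrac{\lambda_2}{2}a_{12}a_{33}$ yields $a_{21}=\mp a_{12}$ for $\lambda_2=2$ and $a_{21}=\pm a_{12}$ for $\lambda_2=-2$, and the $\lambda_2=0$ case reads off $a_{11}=a_{22}a_{33}-a_{23}a_{32}$ as you say.
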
 
We now know when the Lie bracket relations of \eqref{NICE} are preserved for each Lie group. It follows from elementary 
computation that the hypothesis of Lemma \ref{UNIQUE} holds for any of these Lie groups if none of $T_1,T_2,T_3$ is $0$. 
Similarly, it follows that if $T$ satisfies $(i)$ of Lemma \ref{E11S}, then there are infinitely many solutions of $Ric(g)=cT$, but 
$c$ is the same for all solutions because $T_3$ is unchanged by changes of basis preserving \eqref{NICE} in which $T$ remains diagonal. 

For the last of the four Lie groups, $\mathbb{R}^3$ we have $\lambda_1=\lambda_2=\lambda_3=0$. The Ricci curvature of any left-invariant metric on $\mathbb{R}^3$ is $0$. Therefore, any pair $(g,c)$ solves $Ric(g)=cT$ if $T=0$, and there is 
no solution if $T\neq 0$. 

This concludes our analysis of the solvability of the prescribed 
Ricci curvature problem for left-invariant metrics on the six three-dimensional unimodular Lie groups. 
\section*{Acknowledgements}
I would like to thank Artem Pulemotov, J\o rgen Rasmussen and Joseph Grotowski for the helpful 
discussions. This research was partially supported by the Australian Research Council through Artem Pulemotov's Discovery Early Career Researcher Award DE150101548.

\end{document}